\documentclass[12pt]{amsart}
\usepackage{amscd,amsmath,amssymb,amsfonts}
\usepackage{dsfont,enumerate}
\usepackage[cmtip, all]{xy}

\usepackage{graphicx,amssymb,amsmath,amsfonts,amsthm,amscd,hyperref,textcomp,relsize,colortbl}
\usepackage{tikz-cd}
\usetikzlibrary{babel}

\theoremstyle{plain}
\newtheorem{thm}{Theorem}
\newtheorem{lem}[thm]{Lemma}

\newtheorem{prop}[thm]{Proposition}

\theoremstyle{definition}

\newtheorem{rmk}[thm]{Remark}

\numberwithin{thm}{section}
\numberwithin{equation}{thm}

\newcommand{\rank}{{\rm rank}}

\newcommand{\Trace}{{\rm Trace}}

\newcommand{\Norm}{{\rm Norm}}

\newcommand{\sF}{{\mathcal F}}
\newcommand{\sG}{{\mathcal G}}
\newcommand{\sH}{{\mathcal H}}

\newcommand{\sL}{{\mathcal L}}

\newcommand{\A}{{\mathbb A}}

\newcommand{\C}{{\mathbb C}}

\newcommand{\F}{{\mathbb F}}
\newcommand{\G}{{\mathbb G}}

\renewcommand{\P}{{\mathbb P}}
\newcommand{\Q}{{\mathbb Q}}

\newcommand{\Z}{{\mathbb Z}}
\newcommand{\bfZ}{{\mathbf Z}}

\newcommand{\triv}{{\mathds{1}}}
\newcommand{\ord}{\mathrm {ord}}

\newcommand{\CC}{\mathbb C}

\newcommand{\Aut}{\mathrm{Aut}}

\newcommand{\GL}{\mathrm{GL}}
\newcommand{\SL}{\mathrm{SL}}
\newcommand{\SO}{\mathrm{SO}}
\newcommand{\PGL}{\mathrm{PGL}}
\newcommand{\PSL}{\mathrm{PSL}}
\newcommand{\Co}{\mathrm{Co}}
\newcommand{\Alt}{\mathsf {A}}
\newcommand{\SSS}{\mathsf {S}}

\newcommand{\Mellin}{{\mathsf {Mellin}}}
\newcommand{\Teich}{{\mathsf {Teich}}}
\newcommand{\Swan}{{\mathsf {Swan}}}

\begin{document}
\title[Rigid local systems with monodromy group $\Co_3$]{Rigid local systems with monodromy group the Conway group $\Co_3$}
\author{Nicholas M. Katz, Antonio Rojas-Le\'{o}n, and Pham Huu Tiep}
\address{Department of Mathematics, Princeton University, Princeton, NJ 08544, USA}
\email{nmk@math.princeton.edu}
\address{Departamento de \'{A}lgebra, Universidad de Sevilla, c/Tarfia s/n, 41012 Sevilla, Spain}
\email{arojas@us.es}
\address{Department of Mathematics, Rutgers University, Piscataway, NJ 08854, USA}
\email{tiep@math.rutgers.edu}
\thanks{The second author was partially supported by MTM2016-75027-P (Ministerio de Econom\'ia y Competitividad) and FEDER. The third author gratefully acknowledges the support of the NSF (grant 
DMS-1840702).} 

\maketitle

\begin{abstract} 
We first develop some basic facts about certain sorts of rigid local systems on the affine line in characteristic $p >0$. We then apply them to
exhibit a number of rigid local systems of rank $23$ on the affine line in characteristic $p=3$ whose arithmetic and geometric monodromy groups are
the Conway group $\Co_3$ in its orthogonal irreducible representation of degree $23$. 
\end{abstract}

\tableofcontents

\section*{Introduction}
In the first section, we recall the general set up, and some basic results. In the second section, we generalize the criteria of \cite[Thm. 1]{R-L} and \cite[5.1]{Ka-RLSA} for finite (arithmetic and geometric) monodromy to more general local systems. In the third section, we apply these criteria to show that
certain local systems have finite (arithmetic and geometric) monodromy groups. In the fourth section, we show that the finite monodromy groups in question are the Conway group $\Co_3$ in its $23$-dimensional irreducible orthogonal representation.

\section{The basic set up, and general results}
We fix a prime number $p$, a prime number $\ell \neq p$, and a nontrivial $\overline{\Q_\ell}^\times$-valued additive character $\psi$ of $\F_p$. For $k/\F_p$ a finite extension, we denote by  $\psi_k$ the nontrivial additive character of $k$ given by $\psi_k :=\psi\circ \Trace_{k/\F_p}$. In perhaps more down to earth terms, we fix a nontrivial $\Q(\mu_p)^\times$-valued additive character $\psi$, of $\F_p$, and a field embedding of
$\Q(\mu_p)$ into  $\overline{\Q_\ell}$ for some $\ell \neq p$.

Given an integer $D \ge 3$ which is prime to $p$, a finite extension $k/\F_p$, and a polynomial $f(x) \in k[x]$ of degree $D$, we form the local system $\sF_{p,D,f,\triv}$ on $\A^1/k$ whose trace function is given as follows: for $L/k$ a finite extension, at $L$-valued points $t \in \A^1(L) = L$, is given by
$$t \mapsto -\sum_{x \in L}\psi_L(f(x) + tx).$$

This is a geometrically irreducible rigid local system, being the Fourier Transform of the rank one local system $\sL_{\psi(f(x))}$. It has rank $D-1$, it is totally wild at $\infty$, and each of its $D-1$ $I(\infty)$-slopes is $D/(D-1)$. It is pure of weight one.  [When $f(x)$ is $x^D$, it is the local system  $\sF(\F_p,\psi, \triv, D)$ of \cite{Ka-RLSA}.]

Suppose in addition we are given a {\bf nontrivial} character $\chi$ of $k^\times$. For $L/k$ a finite extension, we obtain a nontrivial character $\chi_{L}$ of $L^\times$ by defining $\chi_{L} :=\chi \circ \Norm_{L/k}$.
We then form the local system $\sF_{p,D,f,\chi}$ on $\A^1/k$ whose trace function is given as follows: for $L/k$ a finite extension, at $L$-valued points $t \in \A^1(L) = L$, is given by
$$t \mapsto -\sum_{x \in L}\psi_L(f(x) + tx)\chi_{L}(x).$$
This too is a geometrically irreducible rigid local system, being the Fourier Transform of the rank one local system $\sL_{\psi(f(x))}\otimes \sL_{\chi(x)}$. It has rank $D$. Its $I(\infty)$ representation is the direct sum of the tame character $\sL_{\overline{\chi}(x)}$ with a totally wild representation of rank $D-1$, each of whose $D-1$ $I(\infty)$-slopes is $D/(D-1)$ \cite[Thm. 2.4.3]{Lau}. It is pure of weight one.  [When $f(x)$ is $x^D$, it is the local system  $\sF(\F_p,\psi, \chi, D)$ of \cite{Ka-RLSA}.]

\begin{lem}{\rm (Primitivity Lemma)}\label{primitivity} We have the following results.
\begin{enumerate}[\rm(i)]
\item If both $D$ and $D-1$ are prime to $p$, the local system $\sF_{p,D,f,\triv}$ is not geometrically induced, i.e., there is no 
triple $(U,\pi, \sH)$ consisting of a
connected smooth curve $U/\overline{k}$, a finite etale map $f:U \rightarrow \A^1/\overline{k}$ of degree $d \ge 2$, and a local system $\sH$ on $U$ such that there exists an isomorphism of $\pi_\star \sH$ with (the pullback to $\A^1/\overline{k}$ of) $\sF_{p,D,f,\triv}$.
\item When $D$ is prime to $p$, then for any nontrivial $\chi$,  the local system $\sF_{p,D,f,\chi}$ is not geometrically induced.
\end{enumerate}
\end{lem}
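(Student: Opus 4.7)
The plan is to assume for contradiction, in either part, that $\sF=\pi_{*}\sH$ with $\pi\colon U\to\A^{1}/\overline{k}$ a finite etale map of degree $d\ge 2$, $U$ smooth and connected, and $\sH$ geometrically irreducible lisse of rank $r$ on $U$. I extend $\pi$ to $\bar\pi\colon\bar U\to\P^{1}$ between smooth projective models, with $\bar U$ of genus $g$ and $\bar\pi^{-1}(\infty)=\{\infty_{1},\ldots,\infty_{s}\}$ with ramification indices $e_{j}$, $\sum_{j}e_{j}=d$; then $rd=\mathrm{rk}(\sF)$, which equals $D-1$ in case (i) and $D$ in case (ii).

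The central tool is Fourier inversion. Since $\sF=\mathrm{FT}(\sL_{\psi(f)}\otimes\sL_{\chi})$, the sheaf $\mathrm{FT}(\sF)$ equals, up to shift, the rank-$1$ sheaf $[x\mapsto-x]^{*}(\sL_{\psi(f)}\otimes\sL_{\chi})$, which is lisse on $\A^{1}$ in case (i) and on $\G_{m}$ in case (ii). For any $x_{0}$ in that lisse locus the stalk of $\mathrm{FT}(\sF)$ at $x_{0}$ is therefore one-dimensional and concentrated in a single degree, giving
$$\chi_{c}\bigl(U,\;\sH\otimes\sL_{\psi(\pi(y)x_{0})}\bigr)\;=\;-1.$$
Computing this Euler characteristic by Grothendieck--Ogg--Shafarevich, I use that $\sL_{\psi(\pi(y)x_{0})}$ has slope exactly $e_{j}$ at $\infty_{j}$ (the pole-order of $\pi$ there). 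For $x_{0}$ in a dense open---avoiding the finitely many $x_{0}$ at which a leading Artin--Schreier coefficient cancels with some slope of $\sH$ at some $\infty_{j}$---each of the $r$ slopes of $\sH\otimes\sL_{\psi(\pi(y)x_{0})}$ at $\infty_{j}$ is at least $e_{j}$, so $\mathrm{Swan}_{\infty_{j}}\ge re_{j}$ and $\sum_{j}\mathrm{Swan}_{\infty_{j}}\ge rd$. Combining,
$$-1\;=\;\chi_{c}\;\le\;r(2-2g-s)-rd,\qquad\text{i.e.,}\qquad r(d+s+2g-2)\;\le\;1.$$

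Since $r\ge 1$, $d\ge 2$, $s\ge 1$, $g\ge 0$, the left-hand side is at least $1$, with equality only when $(r,d,s,g)=(1,2,1,0)$. In that unique residual case $\pi\colon\A^{1}\to\A^{1}$ would be a finite etale degree-$2$ cover, which exists only in characteristic $2$ (as an Artin--Schreier cover). For (i) we would then have $rd=D-1=2$, so $D=3$ and $p=2$ divides $D-1=2$, contradicting the hypothesis $p\nmid(D-1)$. For (ii) we would have $rd=D=2$, contradicting $D\ge 3$.

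The main obstacle is the Swan lower bound $\mathrm{Swan}_{\infty_{j}}\ge re_{j}$, i.e.\ the non-cancellation of leading Artin--Schreier terms for generic $x_{0}$. It rests on the fact that cancellation at $\infty_{j}$ requires both that some slope of $\sH|_{I(\infty_{j})}$ equal $e_{j}$ and that $x_{0}$ take a specific value determined by the corresponding leading coefficient; this happens for at most finitely many $x_{0}$, while the required identity $\chi_{c}=-1$ holds on the full Zariski-dense open locus where $\mathrm{FT}(\sF)$ is lisse, so can be evaluated there.
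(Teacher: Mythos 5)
Your Fourier-inversion route is genuinely different from the paper's, which invokes \v{S}uch's classification of induced Airy sheaves for part (i) and runs the Euler--Poincar\'e identity $\chi_c(\A^1,\sF_{p,D,f,\chi})=0$ directly for part (ii); your approach is an attractive way to unify the two parts. However, there is a real gap in the Swan estimate. The compactification $\bar\pi\colon\bar U\to\P^1$ is only \'etale over $\A^1$; over $\infty$ it may be \emph{wildly} ramified, i.e.\ $p$ may divide some $e_j$, even though $p\nmid d=\sum_j e_j$ (which follows from $rd\in\{D-1,D\}$). When $p\mid e_j$, the leading pole term of $\pi(y)x_0$ at $\infty_j$, of order $e_j$, is killed by Artin--Schreier reduction over $\overline{k}$ for \emph{every} $x_0$, not merely finitely many; the slope of $\sL_{\psi(\pi(y)x_0)}$ at $\infty_j$ is then the Herbrand value $\psi_{L_j/K_\infty}(1)$, which is strictly less than $e_j$. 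For instance, with $p=3$ the degree-$4$ cover $\pi\colon\G_m\to\A^1$, $\pi(y)=y^3+1+1/y$, is finite \'etale with $e_\infty=3$, $e_0=1$; at $y=\infty$ one has $\pi(y)x_0\equiv x_0^{1/3}y+x_0+x_0/y$ modulo $\wp$, so the slope there is $1$, not $3$. Thus the bound $\Swan_{\infty_j}\ge re_j$, and hence your key inequality $r(d+s+2g-2)\le 1$, are not justified. Your ``generic $x_0$'' disclaimer correctly handles the separate issue of cancellation against the slopes of $\sH$, but does not touch this one: it is a property of $\pi$ alone, not of $\sH$.

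The argument can be repaired. Since $\pi(y)x_0$ has a genuine pole at each $\infty_j$, the generic slope of $\sL_{\psi(\pi(y)x_0)}$ there is still $\ge 1$, giving $\Swan_{\infty_j}\ge r$ and hence $\sum_j\Swan_{\infty_j}\ge rs$ for generic $x_0$. Combined with $\sum_j\Swan_{\infty_j}=r(2-2g-s)+1$, this forces $r(2s+2g-2)\le 1$ and hence $(s,g)=(1,0)$. In that case $e_1=d$ is prime to $p$, so $\bar\pi$ is tame at the unique $\infty_1$, the slope really is $d$, and one recovers $r(d-1)\le 1$, i.e.\ $(r,d)=(1,2)$, after which your characteristic-$2$ finish applies. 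Alternatively, once $(s,g)=(1,0)$ one may conclude as the paper does for its $s=1$ case: a nonconstant finite \'etale self-map of $\A^1$ has degree divisible by $p$, contradicting $p\nmid rd$.
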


\begin{proof} In case (i), our local system is an Airy sheaf (the Fourier transform of a nonconstant lisse sheaf on $\A^1$ of rank one). 
By a result \cite[11.1]{Such} if an Airy sheaf is induced, it is Artin-Schreier induced, so has rank divisible by $p$.

For (ii), we argue as follows. If such a triple exists, then we have an equality of Euler characteristics
$$EP(U,\sH) =EP(\A^1/\overline{k},\pi_\star \sH) = EP(\A^1/\overline{k},\sF_{p,D,f,\chi}) .$$
Denote by $X$ the complete nonsingular model of $U$, and by $g_X$ its genus. Then $\pi$ extends to a finite flat map of $X$ to $\P^1$, and the Euler-Poincar\'{e} formula gives
$$EP(U,\sH) = \rank(\sH)(2-2g_X -\#(\pi^{-1}(\infty) )) \sum_{w \in \pi^{-1}(\infty)}\Swan_w(\sH),$$
$$EP(\A^1/\overline{k},\sF_{p,D,f,\chi}) = \rank(\sF_{p,D,f,\chi}) -\Swan_{\infty}(\sF_{p,D,f,\chi}) = D-D =0.$$
So we have the equality
$$0 = \rank(\sH)(2-2g_X -\#(\pi^{-1}(\infty) )) \sum_{w \in \pi^{-1}(\infty)}\Swan_w(\sH).$$

We first bound the genus $g_X$. We must have $g_X \le 0$, otherwise the factor $2-2g_X -\#(\pi^{-1}(\infty))$ is $\le -1$, and the right 
hand side is strictly negative. 

Thus $g_X=0$,  and we have
$$0 =  \rank(\sH)(2-\#(\pi^{-1}(\infty) ))- \sum_{w \in \pi^{-1}(\infty)}\Swan_w(\sH).$$

If $\#(\pi^{-1}(\infty)) =1$, then $U$ is $\P^1 \setminus ({\rm one \ point}) \cong \A^1$, and so $\pi$ is a finite etale map of $\A^1$ to itself of degree $> 1$. But any such map has degree divisible by $p$. Indeed, when the map is given by the polynomial $F(x)$, the hypothesis is
that for every $t \in \overline{k}$, the two equations $F(x)=t, F'(x)=0$ have no common solution. If $F'$ had a zero, say $a$, then
$a$ would be a solution of $F(x)=F(a), F'(x)=0$. Thus $F'$ has no zeroes, so is some nonzero constant $A$, and hence $F(x)$ is of the form $G(x)^p +Ax$.

We cannot have $\#(\pi^{-1}(\infty))  \ge 3$, otherwise the factor $2-\#(\pi^{-1}(\infty))$ is strictly negative, and the right side is then strictly negative.
It remains to treat the case when  $\#(\pi^{-1}(\infty)) =2$ (and $g_X=0$). Throwing the two points to $0$ and $\infty$, we have a finite etale
map
$$\pi:\G_m \rightarrow \A^1.$$
The equality of EP's now gives
$$0 =\Swan_0(\sH) + \Swan_\infty(\sH).$$
Thus $\sH$ is lisse on $\G_m$ and everywhere tame, so a successive extension of lisse, everywhere tame sheaves of rank one.
But $\pi_\star \sH$ is irreducible, so $\sH$ must itself be irreducible, hence of rank one, and either $\overline{\Q_\ell}$ or an $\sL_{\rho}$.
[It cannot be $\overline{\Q_\ell}$, because $\pi_\star \overline{\Q_\ell}$ is not irreducible when $\pi$ has degree $>1$; by adjunction $\pi_\star \overline{\Q_\ell}$ contains $\overline{\Q_\ell}$.]
Now consider the maps induced by $\pi$ on punctured formal neighborhoods
$$\pi(0): \G_m{(0)} {\rightarrow  \A^1}{(\infty)},\ \ \pi(\infty): \G_m{(\infty)} \rightarrow  {\A^1}{(\infty)}.$$
The $I(\infty)$-representation of $\sF_{p,D,f,\chi}$ is then the direct sum
$$\pi(0)_\star \sL_{\rho} \oplus \pi(\infty)_\star \sL_{\rho}.$$
Denote by $d_0$ and $d_\infty$ their degrees. For any tame $\sL_\Lambda$, we have
$$\pi(0)^\star \sL_\Lambda \cong  \sL_{\Lambda^{d_0}}, \ \ \pi(\infty)^\star \sL_\Lambda \cong  \sL_{\Lambda^{d_\infty}}.$$
Since the tame character group is divisible, there exist $\Lambda_0$ with $\Lambda_0^{d_0}=\rho$ (in fact, as many as the prime to $p$ part $n_0$ of $d_0=n_0\times({\rm a\ power \ of\ }p)$), and there exist $\Lambda_\infty$ with $\Lambda_\infty^{d_\infty}=\rho$ (in fact, as many as the prime to $p$ part $n_\infty$ of $d_\infty=n_\infty \times ({\rm a\ power \ of\ }p)$,

 Thus if $\sF_{p,D,f,\chi}$ were induced, its $I(\infty)$ representation would contain at least two tame characters. 
\end{proof}

Let $k$ be a finite extension of $\F_p$, $\ell \neq p$, $U/k$ a smooth, geometrically connected $k$-scheme of relative dimension $\ge 0$, and $\sG$ a $\overline{\Q_\ell}$ local system on $U$ of rank $d \ge 1$. Viewing $\sG$ as a representation of $\pi_1(U)$, say
$$\rho_{\sG}:\pi_1(U) \rightarrow \GL_d(\overline{\Q_\ell}),$$
we get its arithmetic monodromy group $G_{arith}$, defined to be the Zariski closure of the image of $\pi_1(U)$. Inside $\pi_1(U)$ we
its normal subgroup $\pi_1^{geom}(U):=\pi_1(U\otimes_k \overline{k})$. The Zariski closure of the image of $\pi_1^{geom}(U)$ is the
geometric monodromy group $G_{geom}$. Thus we have
$$G_{geom} \lhd G_{arith} \subset \GL_d(\overline{\Q_\ell}).$$

When we apply this general machine to the local system  $\sF_{p,D,f,\triv}$ on $\A^1/k$, we get its 
$$G_{geom} \lhd G_{arith} \subset \GL_{D-1}(\overline{\Q_\ell}).$$

Similarly, for any nontrivial $\chi$, when we apply the general machine to the local system $\sF_{p,D,f,\chi}$ on $\A^1/k$, we get its 
$$G_{geom} \lhd G_{arith} \subset \GL_{D}(\overline{\Q_\ell}).$$

\begin{lem}\label{p-group} {\rm ($p$-subgroups of $G_{geom}$)} Suppose both $D \ge 3$ and $D-1$ are prime to $p$. Denote by $f$ the multiplicative order of $p$ in $(\Z/(D-1)\Z)^\times$, so that $\F_{p^f}$ is the extension $\F_p(\mu_{D-1})$ of $\F_p$ obtained by adjoining the $D-1$ roots of unity. Then for the particular local systems 
$\sF_{p,D,x^D,\triv}$ or $\sF_{p,D,x^D,\chi}$ with any nontrivial $\chi$, the image in $G_{geom}$ of the wild inertia group $P(\infty)$ is isomorphic to (the additive group of)  $\F_{p^f}$.
\end{lem}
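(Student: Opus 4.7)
The plan is to use Laumon's local Fourier transform formula to describe the $I(\infty)$-representation after a suitable tame pullback, and then to read off the image of $P(\infty)$ by linear algebra over $\F_p$. A first reduction: the $I(\infty)$-representation of $\sF_{p,D,x^D,\chi}$ is already identified in the excerpt as the direct sum of the tame character $\sL_{\overline{\chi}(x)}$ with a totally wild rank $D-1$ piece, and $P(\infty)$ acts trivially on the tame summand. The wild piece is, up to tame twist, isomorphic to the whole $I(\infty)$-representation of $\sF_{p,D,x^D,\triv}$, because both arise from the local Fourier transform $\mathrm{FT}^{(\infty,\infty)}$ applied to $\sL_{\psi(x^D)}$ (respectively to $\sL_{\psi(x^D)}\otimes \sL_\chi$), and tame twists leave the $P(\infty)$-action unchanged. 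So the image of $P(\infty)$ is the same in both cases, and it suffices to treat $\sF := \sF_{p,D,x^D,\triv}$.

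For $\sF=\mathrm{FT}(\sL_{\psi(x^D)})$, stationary phase at the $D-1$ critical points $x_0$ of $x^D+tx$ (solutions of $x_0^{D-1}=-t/D$) gives critical values $\frac{D-1}{D}tx_0$. Pulling back along the tame cyclic cover $\pi:u\mapsto u^{D-1}=t$ of degree $D-1$, these $D-1$ critical values become the $D-1$ monomials $c\zeta\cdot u^D$ for $\zeta\in\mu_{D-1}$, with $c\in\overline{\F_p}^\times$ a nonzero constant (the factor $D$ is invertible in $\overline{\F_p}$ since $D$ is prime to $p$). Laumon's formula \cite{Lau} identifies $\pi^*(\sF|_{I(\infty)})$ with a direct sum
\[
\bigoplus_{\zeta\in\mu_{D-1}}\sL_{\psi(c\zeta u^D)}\otimes \sL_{\Lambda_\zeta}
\]
for some tame characters $\Lambda_\zeta$. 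Because a tame pullback leaves $P(\infty)$ unchanged and the tame tensor factors act trivially on $P(\infty)$, the image of $P(\infty)$ in $G_{geom}$ equals the image under the product $\prod_\zeta\chi_\zeta:P(\infty)\rightarrow \mu_p^{D-1}$ of the Artin--Schreier characters $\chi_\zeta$ attached to $\sL_{\psi(c\zeta u^D)}$.

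Finally, I compute the $\F_p$-dimension of this image. Since $D$ is prime to $p$, no nonzero scalar multiple of $u^D$ lies in $\{h^p-h : h\in\overline{\F_p}[u]\}$, so $\sL_{\psi(au^D)}\cong \sL_{\psi(bu^D)}$ as characters of $P(\infty)$ forces $a=b$. Applied to $\F_p$-linear combinations, this yields injectivity of the $\F_p$-linear map
\[
\F_p[\mu_{D-1}]\longrightarrow \Hom(P(\infty),\mu_p),\qquad \sum a_\zeta\zeta\;\mapsto\;\prod \chi_\zeta^{a_\zeta},
\]
whose image is precisely the $\F_p$-span of $\{\chi_\zeta\}$. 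By Pontryagin duality for the finite elementary abelian $p$-group $P(\infty)/\ker(\prod\chi_\zeta)$, the image of $P(\infty)$ in $\mu_p^{D-1}$ has the same $\F_p$-dimension as this span, namely $\dim_{\F_p}\F_p[\mu_{D-1}]=[\F_p(\mu_{D-1}):\F_p]=f$. Hence the image of $P(\infty)$ in $G_{geom}$ is isomorphic to $\F_{p^f}$ as an additive group. The main obstacle is being sufficiently precise about Laumon's local Fourier transform formula to exhibit the $D-1$ distinct nonzero leading coefficients $c\zeta$; the rest is a clean dimension count over $\F_p$.
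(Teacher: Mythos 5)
Your proof is correct in substance, and it reaches the same endgame as the paper (compute the $\F_p$-span of the Artin--Schreier characters in the character group of $P(\infty)$ and identify it with $\F_p[\mu_{D-1}]=\F_{p^f}$), but it gets to the direct-sum decomposition by a different route. The paper first descends $\sF|_{\G_m}$ through the $D$th-power map to a Kloosterman sheaf $\sK$ whose wild $I(\infty)$-part $W$ has rank $D-1$ and slopes $1/(D-1)$; it then invokes the uniqueness theorem \cite[8.6.3]{Ka-ESDE} to replace $W$ by the explicit model $[D-1]_\star\sL_{\psi(x)}$, and pulls back by $[D-1]$ to obtain $\bigoplus_{\zeta}\sL_{\psi(\zeta x)}$. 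You instead work directly with $\sF$ and pull back its $I(\infty)$-representation along the degree-$(D-1)$ tame cover $u\mapsto u^{D-1}=t$, using Laumon's stationary-phase description of $\mathrm{FT}^{(\infty,\infty)}$ to get $\bigoplus_{\zeta}\sL_{\psi(c\zeta u^D)}\otimes\sL_{\Lambda_\zeta}$; since $D$ is prime to $p$, the phase $c\zeta u^D$ being an exact monomial (a feature special to $f(x)=x^D$) makes the $\F_p$-linear-algebra step clean, exactly as in the paper. What the paper's route buys is that it never needs the precise form of stationary phase --- only the classification statement \cite[8.6.3]{Ka-ESDE} --- whereas your route is slightly more computational but avoids the detour through the Kloosterman descent.

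One small imprecision worth flagging: your reduction from $\chi\neq\triv$ to $\triv$ asserts that the wild part of $\sF_{p,D,x^D,\chi}$ at $\infty$ is ``up to tame twist'' the $I(\infty)$-representation of $\sF_{p,D,x^D,\triv}$, on the grounds that both come from $\mathrm{FT}^{(\infty,\infty)}$ of tame twists of each other. The local Fourier transform does not literally commute with tame twisting in this way. The correct statement, which is what the paper uses, is that any two totally wild $I(\infty)$-representations of the same rank $D-1$ with all slopes equal to $D/(D-1)$ (a fraction in lowest terms) are related by a tame twist \emph{and a multiplicative translation} \cite[8.6.3]{Ka-ESDE}; both operations preserve the image of $P(\infty)$, so your conclusion is right, but the justification you give is not quite the one that makes it work.
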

\begin{proof}In all cases, the local system, restricted to $\G_m$, descends through the $D$'th power map. For $\sF_{p,D,x^D,\triv}$,  the descent is given explicitly in terms of trace functions as
$$ t \mapsto -\sum_{x \in k}\psi_k(x^D/t + x).$$
For $\sF_{p,D,x^D,\chi}$, we must first choose a character $\Lambda$ with $\Lambda^D =\overline{\chi}$. Then the descent is given explicitly interms of trace function as
$$ t \mapsto -\sum_{x \in L^\times}\Lambda_L(t)\psi_k(x^D/t + x)\chi_L(x).$$
In fancier terms, the first descent is to a Kloosterman sheaf of rank $D-1$, the second is to a hypergeometric sheaf of type $(D,1)$, cf. \cite [2.1]{Ka-RLSA}.

In all cases, the wild part $W$ of the $I(\infty)$ representation has rank $D-1$ and all slopes $1/(D-1)$. Because $D-1 \ge 2$, one knows \cite[8.6.3]{Ka-ESDE} that $W$ is unique up to tensoring with a tame character and performing a multiplicative translation. Thus the underlying $P(\infty)$-representation is unique up to a multiplicative translation, which does not change its image in $G_{geom}$. Because $D-1$ is prime to $p$, we obtain one such $W$ by forming the direct image by $D-1$ power
$$W := [D-1]_\star \sL_{\psi(x)}.$$

Because $D-1$ is prime to $p$, the image of $P(\infty)$ does not change if we pass to the pullback 
$$[D-1]^\star W =[D-1]^\star [D-1]_\star  \sL_{\psi(x)}   \cong \bigoplus_{\zeta \in \mu_{D-1}} \sL_{\psi(\zeta x)}.$$
In other words, the image of $P(\infty)$ is the abelian group whose character group consists of all monomials
$$\otimes _{\zeta \in \mu_{D-1}} \sL_{\psi(\zeta x)}^{\otimes n_{\zeta}}= \sL_{\psi(\sum_{\zeta \in \mu_{D-1}} n_{\zeta} \zeta x)}.$$
as each  $n_{\zeta}$ runs over $\Z/p\Z$. This character group is thus the subring $\F_p[\mu_{D-1}] \subset \F_{p^f}$ consisting of all $\F_p$-linear combinations of elements of $ \mu_{D-1}$. But this subring, being a finite integral domain, is itself a field. It lies in $\F_{p^f}$, and
contains $\mu_{D-1}$, so it is  $\F_{p^f}$. One knows that (by the trace),  $\F_{p^f}$ is its own Pontrayagin dual.
\end{proof}

\section{Criteria for finite monodromy}
We first recall the basic underlying result, cf \cite[8.14]{Ka-ESDE}. [It is stated there for a local system on an open curve, but the "on a curve" hypothesis never enters the proof. Also, the exact rule of the hypothesis of geometric irreducibility is less clear than it might be, cf. Remark \ref{nonirred} below.

Let $k$ be a finite extension of $\F_p$, $\ell \neq p$, $U/k$ a smooth, geometrically connected $k$-scheme of relative dimension $\ge 0$, and $\sG$ a $\overline{\Q_\ell}$ local system on $U$ of rank $d \ge 1$. We have its geometric and arithmetic monodromy groups
$$G_{geom} \lhd G_{arith} \subset \GL_d(\overline{\Q_\ell}).$$

\begin{prop}\label{finmonocrit} Suppose we have $(k,\ell,U, \sG)$ as above. Suppose further that $\sG$ is pure of weight zero for all embeddings of $\overline{\Q_\ell}$ into $\C$. Consider the following four conditions.
\begin{enumerate}[\rm(a)]
\item $G_{arith}$ is finite.
\item All traces of $\sG$ are algebraic integers. More precisely, for every finite extension $L/k$, and for every point $u \in U(L)$, $\Trace(Frob_{L,u} |\sG)$ is an algebraic integer.
\item $G_{geom}$ is finite.
\item $\det(\sG)$ is arithmetically of finite order.
\end{enumerate}
Then we have the implications
$${\rm (a)} \implies {\rm (b)} \implies {\rm (c)},\ {\rm (b)} \implies {\rm (d)},$$
and if $\sG$ is geometrically irreducible, we have ${\rm (a)} \iff {\rm (b)} \iff {\rm (c)}$.
\end{prop}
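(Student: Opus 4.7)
The plan is to use Chebotarev density, Kronecker's theorem, Schur's lemma, and Deligne's geometric semisimplicity theorem (Weil II). The implications out of (a) are tautological: a finite subgroup of $\GL_d(\overline{\Q_\ell})$ consists of finite-order matrices, whose traces are sums of roots of unity and whose determinants form a finite group, giving (b) and (d).

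For the implications from (b), I first exploit that any power $Frob_{L,u}^n$ is itself a Frobenius (at the degree-$n$ extension of $L$), so hypothesis (b) forces $\Trace(Frob_{L,u}^n\mid\sG)$ to be an algebraic integer for every $n \geq 1$. Newton's identities then make the eigenvalues $\alpha_1,\ldots,\alpha_d$ of $Frob_{L,u}$ algebraic integers; purity of weight zero forces $|\alpha_i|=1$ in every complex embedding, so Kronecker's theorem makes each $\alpha_i$ a root of unity. From this (d) follows: $\det(Frob)=\prod\alpha_i$ is a root of unity in the fixed number field over which $\sG$ can be defined, so the continuous character $\det(\sG)$ has finite image. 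For (c), Chebotarev density places Frobenii Zariski-densely in $G_{arith}$; since algebraic integers of bounded degree with bounded archimedean absolute value form a finite set, the regular function $\Trace$ takes finitely many values on $G_{arith}$, hence is constant on each irreducible component, equal to $d$ on $G_{arith}^0$. Applied to every power of any $g\in G_{arith}^0$, this forces all its eigenvalues to be $1$ (by the usual generating-function identity for power sums), so $G_{arith}^0$ and its subgroup $G_{geom}^0$ are unipotent. But Deligne's theorem makes $G_{geom}^0$ reductive from purity, and a connected unipotent reductive group is trivial, so $G_{geom}$ is finite.

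For the reverse direction $(c) \Rightarrow (a)$ under geometric irreducibility, Schur's lemma applied to the finite irreducible $G_{geom}$ gives $Z_{\GL_d}(G_{geom}) = \overline{\Q_\ell}^\times \cdot I$, so $N_{\GL_d}(G_{geom})$ is a finite union $\bigcup_i \overline{\Q_\ell}^\times \cdot f_i$ of scalar cosets, and contains $G_{arith}$; the projective image of $G_{arith}$ in $\PGL_d$ is thus finite. Finiteness of the scalar part then follows from $\det(\sG)$ being arithmetically of finite order: the scalar $\lambda$ in a decomposition $g=\lambda f_i\in G_{arith}$ satisfies $\lambda^d \det(f_i)$ of bounded order with $\det(f_i)$ in a finite set, forcing $\lambda$ into a finite set of roots of unity. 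Hence $G_{arith}$ is finite.

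\textbf{The main obstacle} is the implication (b) $\Rightarrow$ (c): one must exhibit $G_{arith}^0$ as simultaneously unipotent (from Kronecker plus Northcott-type finiteness of bounded algebraic integers) and reductive (from Deligne's geometric semisimplicity), so that it collapses to a point. Weight-zero purity is essential for both halves.
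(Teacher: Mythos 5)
Your proof is correct, and its overall shape matches the paper's: purity plus integrality gives root-of-unity Frobenius eigenvalues, Deligne's Weil~II semisimplicity kills the identity component of $G_{geom}$, and the irreducible case of $(c)\Rightarrow(a)$ goes through the finiteness of $\Aut(G_{geom})$ together with control of the scalar part via the determinant.

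There are two genuine (if minor) divergences from the paper's route. For $(b)\Rightarrow(c)$, the paper observes that all the roots of unity in question live inside a fixed finite extension $E_\lambda(\mu)$ of $\Q_\ell$ of degree bounded by $\mathrm{rank}\,\sG$, hence are all $N$-th roots of unity for a \emph{uniform} $N$; Chebotarev then gives $\gamma^N$ unipotent for every $\gamma\in G_{arith}$, and one kills a maximal torus of $G_{geom}^0$ directly. You instead use the Northcott-type finiteness of algebraic integers of bounded degree and bounded archimedean size to conclude the trace is constant (necessarily equal to $d$) on $G_{arith}^0$, and then the power-sum identity to get unipotence. Both arguments rest on the same realizability of $\sG$ over a finite extension of $\Q_\ell$, and both then invoke Deligne; your version is a slicker packaging but is not materially different. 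For $(c)\Rightarrow(a)$ the paper simply cites \cite[8.14.3.1]{Ka-ESDE}; you supply the argument yourself --- Schur's lemma identifies $Z_{\GL_d}(G_{geom})$ with the scalars, finiteness of $\Aut(G_{geom})$ bounds $N_{\GL_d}(G_{geom})$ modulo scalars, and condition (d) pins down the scalar part. This is exactly the content of the cited result, so your write-up is a useful unpacking rather than a new approach. One small imprecision: you say Deligne makes $G_{geom}^0$ ``reductive''; Weil~II actually gives the stronger statement that $G_{geom}^0$ is semisimple, which is what the paper uses, though reductive suffices for the ``connected unipotent reductive is trivial'' conclusion.
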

\begin{proof}The implications ${\rm (a)} \implies {\rm (b)}$ and ${\rm (a)} \implies {\rm (c)}$ are both obvious. 

We next show that ${\rm (b)} \implies {\rm (c)}$ and ${\rm (b)} \implies {\rm (d)}$. If (b) holds, then all the eigenvalues of
each Frobenius are algebraic integers which, by purity, have absolute value $1$ at all archimedean places, hence are roots of
unity. Because $\sG$ is realizable over some finite extension $E_\lambda$ of ${\Q_\ell}$, each of these roots of unity
lies in a extension of $E_\lambda$ of degree at most the rank of $\sG$. As there are only finitely many such extensions inside $\overline{\Q_\ell}$,
all eigenvalues are roots of unity in a fixed finite extension of $\Q_\ell$, so are all $N$'th roots of unity for some $N$. 
Applying this same argument to the rank one local system $\det(\sG)$, we see that $\det(\sG)^{\otimes N}$ is trivial, i.e. we see that
${\rm (b)} \implies {\rm (d)}$.
By Chebotarev and Zariski density, every $\gamma \in G_{arith}$ has $\gamma^N$ unipotent. 
In particular, every element in $G_{geom}$, and hence every element in the identity component $G_{geom}^0$ has $N$'th power unipotent. By Deligne \cite[1.3.8 and 3.4.1 (iii)]{De-Weil II}, $G_{geom}^0$ is a semisimple algebraic group.
Looking at elements of a maximal torus, we see that $G_{geom}^0$ has rank $0$, hence $G_{geom}^0$ is trivial and thus $G_{geom}$ is finite.

When $\sG$ is geometrically irreducible, the implication ${\rm (c)} \implies {\rm (a)}$, using (d), is proven in \cite[8.14.3.1]{Ka-ESDE}.
\end{proof}

\begin{rmk}\label{nonirred}Here is an example to show that geometric irreducibility is needed to prove that ${\rm (b)} \implies {\rm (a)}$. Take any $U/k$, and take on it the pullback from $Spec(k)$ of the geometrically constant local system $\beta^{deg}$ for $\beta$ the upper unipotent matrix $\small \left(\begin{array}{cc}1 & 1 \\ 0 &1 \end{array}\right)$.
Then all Frobenius eigenvalues are $1$, $G_{geom}$ is trivial, but $G_{arith}$ is the upper unipotent subgroup $\small\left(\begin{array}{cc}1 & \star \\ 0 &1 \end{array}\right)$ of $SL(2)$.

On the other hand, suppose (b) holds. If we pass from $\sG$ viewed as a representation of $\pi_1^{arith}$, to its semisimplification $\sG^{ss}$, (which has the same trace function as $\sG$), then
$G_{arith,\sG^{ss}}$ is reductive. Then the fact that every element in this group has $N$'power unipotent shows that the identity
component $G_{arith,\sG^{ss}}^0$ is trivial (look at a maximal torus), and hence $G_{arith,\sG^{ss}}$ is finite.
\end{rmk}

We now define the sort of multi-parameter local systems it will be convenient to work with. We fix an integer $D \ge 3$ prime to p,
and a sequence of integers of length $r  \ge 1$,
$$1= d_1 < d_2 <\ldots<d_r <D,$$
each of which is itself prime to $p$. [Because of Artin-Schreier reduction, requiring the $d_i$ to be prime to $p$ is no loss of generality.]
We form the local system $\sF(p, D, d_1,\ldots,d_r,\triv)$ on $\A^{r-1}$ whose trace function is as follows: for $K/\F_p$ a finite extension, and  $(t_,\ldots,t_r) \in \A^r(K)$, the trace function is
$$(t_,\ldots,t_r) \mapsto -\sum_{x \in K}\psi_K(x^D + \sum_i t_i x^{d_i}).$$
When $p$ is odd, we also form the local system $\sF(p, D, d_1,\ldots,d_r,\chi_2)$ on $\A^{r-1}$ whose trace function is as follows: for $K/\F_p$ a finite extension, and  $(t_,\ldots,t_r) \in \A^r(K)$, the trace function is
$$(t_,\ldots,t_r) \mapsto -\sum_{x \in K^\times}\psi_K(x^D + \sum_i t_i x^{d_i})\chi_{2,K}(x).$$

Because $d_1 =1$, these local systems are geometrically irreducible (indeed, for $r=1$ these are $\sF(p,D,x^D,\triv)$ and $\sF(p,D,x^D,\chi_2)$). If $r \ge 2$,
their pullbacks to $\A^1$ by freezing $t_2=\ldots=t_r=0$ are $\sF(p,D,x^D,\triv)$ and $\sF(p,D,x^D,\chi_2)$. They are pure of weight one,
for all embeddings of $\overline{\Q_\ell}$ into $\C$. Moreover, their traces lie in $\Z[\zeta_p]$.

For each of them, we now fix a version of a half Tate twist as follows: we choose an algebraic integer $\alpha$ which is (some root of unity)$\sqrt{p}$, typically some fourth root of $p^2$.


Using our chosen $\alpha$, we then define
$$\sG(p, D, d_1,\ldots,d_r,\triv):= \sF(p, D, d_1,\ldots,d_r,\triv)\otimes (1/\alpha)^{deg},$$
and, when $p$ is odd, 
$$\sG(p, D, d_1,\ldots,d_r,\chi_2):= \sF(p, D, d_1,\ldots,d_r,\chi_2)\otimes (1/\alpha)^{deg}.$$
 
The local systems $\sG_{p,D,f,\triv}$ and, when $p$ is odd, $\sG_{p,D,f,\chi_2}$, are pure of weight zero. Their determinants are arithmetically of finite order, cf. the proof of \cite[Lemma 1.1]{Ka-RL}.

For later use, we record the following facts about autoduality.
\begin{lem}\label{autoduality}Suppose that $D$ and all the $d_i$ are {\bf odd}. Then there is a preferred choice of $\alpha$, as follows.
\begin{itemize}
\item[(1)]For $\alpha$ either choice of $\sqrt{p}$, $\sG(p, D, d_1,\ldots,d_r,\triv)$ has
$$G_{geom} \lhd G_{arith} \subset {\rm Sp}_{D-1}(\overline{\Q_\ell}).$$
\item[(2)]If $p$ is odd, write $D=2\delta +1$. Then for $\alpha :=-\chi_2((-1)^\delta D)g(\psi,\chi_2)$  ($g(\psi,\chi_2)$ the quadratic Gauss sum over $\F_p$), $\sG(p, D, d_1,\ldots,d_r,\chi_2)$ has
$$G_{geom} \lhd G_{arith} \subset {\rm SO}_D(\overline{\Q_\ell}).$$
\item[(3)] Denote by $\Q(\zeta_p)^{+}$ the real subfield of $\Q(\zeta_p)^{+}$. In case (1), the traces attained lie in  $\Q(\zeta_p)^{+}(\sqrt{p})$. In case (2), they lie in $\Q(\zeta_p)^{+}$.
\end{itemize}
\end{lem}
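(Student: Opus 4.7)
The plan is to exploit the substitution $x\mapsto -x$. Since $D$ and all the $d_i$ are odd, the polynomial $f_t(x):=x^D+\sum_i t_ix^{d_i}$ is an odd function of $x$. In the sum $-\sum_x\psi_K(f_t(x))$ defining the trace of $\sF(p,D,d_1,\ldots,d_r,\triv)$, this substitution replaces $\psi_K(f_t(x))$ by $\psi_K(-f_t(x))=\overline{\psi_K(f_t(x))}$, so the trace $S_\triv(t)$ lies in $\Z[\zeta_p]\cap\R\subset\Q(\zeta_p)^{+}$. In the $\chi_2$ case, $\chi_{2,K}(-x)=\chi_{2,K}(-1)\chi_{2,K}(x)$ yields
$$\overline{S_{\chi_2}(t)}=\chi_{2,K}(-1)\,S_{\chi_2}(t),$$
while the Gauss sum $g:=g(\psi,\chi_2)$ satisfies $\bar g=\chi_2(-1)g$ and $g^2=\chi_2(-1)p$. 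Via Hasse--Davenport, for any $\alpha=\pm g$ the ratio $S_{\chi_2}(t)/\alpha^{[K:\F_p]}$ lies in $\Q(\zeta_p)\cap\R=\Q(\zeta_p)^{+}$. Together these observations yield claim~(3) once the right scalar $\alpha$ is fixed.

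Now each $\sG$ is pure of weight zero, geometrically irreducible, and has all Frobenius traces real. Since the Frobenius eigenvalues have absolute value $1$, each is the inverse of its complex conjugate, so the trace of Frobenius on $\sG^{\vee}$ is the complex conjugate of that on $\sG$, hence equals the trace on $\sG$. Chebotarev density and semisimplicity then give $\sG\cong\sG^{\vee}$ as $\pi_1^{geom}$-representations, and Schur's lemma endows $\sG$ with a non-degenerate $\pi_1^{geom}$-invariant bilinear form, unique up to scalar, either symmetric or alternating.

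In case~(2) the rank $D$ is odd, forcing the form to be symmetric and $G_{geom}\lhd G_{arith}\subset{\rm O}_D(\overline{\Q_\ell})$. To upgrade to $\SO_D$, I need $\det(\sG)$ arithmetically trivial; since this determinant is of order dividing $2$, I would pin down its sign by evaluating on the fibre $t_2=\ldots=t_r=0$, where $\sF$ reduces to $\sF(\F_p,\psi,\chi_2,D)$ of \cite{Ka-RLSA}, whose determinant is known explicitly as a product of Gauss sums. The scalar $-\chi_2((-1)^\delta D)g(\psi,\chi_2)$ is exactly the $D$-th root of that product; verifying this is a direct Gauss-sum calculation using $g^2=\chi_2(-1)p$ and Hasse--Davenport.

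In case~(1) the rank $D-1$ is even and both symmetric and alternating forms are a priori possible; the crux is to rule out the symmetric case. I would argue locally at $\infty$: since $D-1$ is prime to $p$, the wild part of the $I(\infty)$-representation of $\sF$ is, up to tame twist and multiplicative translation, $[D-1]_{\star}\sL_{\psi(x)}$, and a direct inspection of Laumon's stationary phase formula \cite[Thm.~2.4.3]{Lau} shows its natural autoduality pairing is alternating precisely when $D$ is odd. By irreducibility, the unique global self-dual pairing of $\sG$ restricts to this local one, forcing $G_{geom}\subset{\rm Sp}_{D-1}$; since every symplectic matrix has determinant $1$, we get $G_{arith}\subset{\rm Sp}_{D-1}$ automatically. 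The main obstacle is this Sp-vs-O distinction --- alternatively it can be read off from the classification of local monodromies of Airy sheaves in \cite[Ch.~8]{Ka-ESDE}.
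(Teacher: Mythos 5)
The paper's own proof is almost entirely a citation: assertion (1) is ``Poincar\'e duality,'' assertion (2) is delegated to \cite[1.7]{Ka-NG2}, and (3) is complex conjugation on the trace formula. Your treatment of (3) via $x\mapsto -x$ and the computation of $\bar g$ is correct and is exactly the ``obvious from complex conjugation'' point. Your treatment of (2) is also in the right spirit: odd rank forces the invariant form to be symmetric, and one then needs to pin down $\det(\sG)$ by a Gauss-sum computation (this is what \cite[1.7]{Ka-NG2} does); you sketch this but do not carry it out.

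The genuine gap is in your argument for (1). First, you write ``since $D-1$ is prime to $p$, the wild part of the $I(\infty)$-representation\dots is $[D-1]_\star\sL_{\psi(x)}$,'' but $D-1$ prime to $p$ is \emph{not} a hypothesis of the lemma (the hypotheses are only that $D$ and the $d_i$ are odd and $D$ is prime to $p$; e.g.\ $p=3$, $D=7$ has $p\mid D-1$). Your local model of the wild part, and hence the entire strategy of reading the pairing type off a local model, collapses in that case. Second, even when $D-1$ is prime to $p$, the assertion that ``a direct inspection of Laumon's stationary phase formula shows its natural autoduality pairing is alternating precisely when $D$ is odd'' is precisely the crux and is stated without proof; it is not a transparent consequence of \cite[Thm.\ 2.4.3]{Lau}. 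The paper's route is both shorter and free of these difficulties: realize $\sF_t$ as $H^1_c(\A^1,\sL_{\psi(f(x)+tx)})$; since $f$ is odd, $[x\mapsto -x]^\star\sL_{\psi(f+tx)}\cong\sL_{\psi(f+tx)}^\vee$, the forget-supports map is an isomorphism by total wildness at $\infty$, and Poincar\'e duality together with the graded commutativity of cup product in cohomological degree $1$ \emph{automatically} produces an alternating pairing, with no hypothesis on $D-1\bmod p$ and no local analysis. You should replace your local argument for (1) with this global cohomological one, and either supply the Gauss-sum determinant computation for (2) or cite \cite[1.7]{Ka-NG2} as the paper does.
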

\begin{proof}Assertion (1) is  Poincar\'{e} duality. Assertion (2) is proved in \cite[1.7]{Ka-NG2}. Assertion (3) is obvious from applying complex conjugation to the formulas for the traces.
\end{proof}


The local system $\sG(p, D, d_1,\ldots,d_r,\triv)$ on $\A^r/\F_p$ has its
$$G_{geom}\lhd G_{arith} \subset \GL_{D-1}(\overline{\Q_\ell}).$$

Similarly, when $p$ is odd, the local system $\sG(p, D, d_1,\ldots,d_r,\chi_2)$ 
on $\A^r/\F_p$ has its
$$G_{geom}\lhd G_{arith} \subset \GL_D(\overline{\Q_\ell}).$$

We now apply  \ref{finmonocrit} to these local systems.
\begin{prop}\label{arithfin}The following conditions are equivalent.
\begin{enumerate}
\item [\rm (a)]$\sG(p, D, d_1,\ldots,d_r,\triv)$ has finite $G_{geom}$.
\item [\rm (a-bis)] $\sF(p, D, d_1,\ldots,d_r,\triv)$ has finite $G_{geom}$.
\item [\rm (b)] $\sG(p, D, d_1,\ldots,d_r,\triv)$ has finite $G_{arith}$.
\item [\rm (c)] All traces of  $\sG(p, D, d_1,\ldots,d_r,\triv)$ are algebraic integers.
\end{enumerate}
When $p$ is odd, we have these same equivalences for $\sG(p, D, d_1,\ldots,d_r,\chi_2)$.
\end{prop}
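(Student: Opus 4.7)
The plan is to obtain this proposition as a direct specialization of the general criterion \ref{finmonocrit}; essentially all the substantive work has already been carried out either in that criterion or in the discussion leading up to the present statement.

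First I would verify the hypotheses of \ref{finmonocrit} for $\sG := \sG(p,D,d_1,\ldots,d_r,\triv)$ (and, when $p$ is odd, for the $\chi_2$-variant). Purity of weight zero under every complex embedding of $\overline{\Q_\ell}$ is built into the definition via the twist $(1/\alpha)^{deg}$ with $|\alpha|=\sqrt{p}$, and is stated explicitly just above the proposition. Condition (d) of \ref{finmonocrit}, namely that $\det(\sG)$ is arithmetically of finite order, is likewise recorded via \cite[Lemma 1.1]{Ka-RL}. Geometric irreducibility of $\sG$ (the remaining nontrivial input) is also noted in the text: because $d_1=1$, the pullback under $t_2=\cdots=t_r=0$ recovers the rank-one Fourier transform $\sF_{p,D,x^D,\triv}$ (resp.\ $\sF_{p,D,x^D,\chi_2}$), which is geometrically irreducible as a Fourier transform of a geometrically irreducible rank-one sheaf; and irreducibility of a restriction to a nonempty closed subscheme forces irreducibility of the original.

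With these three inputs in hand, \ref{finmonocrit} at once yields the equivalences (a) $\iff$ (b) $\iff$ (c) of the present proposition: condition (b) here is (a) of \ref{finmonocrit}, condition (c) here is (b) of \ref{finmonocrit}, condition (a) here is (c) of \ref{finmonocrit}, and geometric irreducibility is precisely the hypothesis that closes the chain of implications of \ref{finmonocrit} into a cycle of equivalences.

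Finally, I would dispatch (a) $\iff$ (a-bis), which is immediate: $\sG$ and $\sF$ differ only by the rank-one factor $(1/\alpha)^{deg}$, whose Frobenius trace at any closed point depends solely on the residue-field degree, not on the point itself. This twist is therefore geometrically constant, so $G_{geom}(\sG) = G_{geom}(\sF)$ and one is finite iff the other is. The same argument applies verbatim to $\sG(p,D,d_1,\ldots,d_r,\chi_2)$ when $p$ is odd. I do not foresee any genuine obstacle: the real technical content sits in \ref{finmonocrit}, and the present proposition is essentially a matter of observing that the hypotheses of that criterion hold, together with the harmless remark that a $\deg$-twist does not disturb the geometric monodromy group.
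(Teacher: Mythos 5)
Your argument is correct and matches the paper's intended (but unwritten) proof: the paper simply states the proposition after the remark ``We now apply \ref{finmonocrit} to these local systems,'' and the inputs you check (purity of weight zero after the twist, finite order of the determinant, geometric irreducibility via restriction to $t_2=\cdots=t_r=0$, and the observation that the $\deg$-twist is trivial on $\pi_1^{geom}$ so preserves $G_{geom}$) are exactly the relevant ones and are all recorded in the discussion preceding the proposition.
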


We now perform two successive reduction steps. The first is to pull back these local systems from $\A^r$ to $\G_m^r$, i.e., requiring the
coefficients $t_i$ to all be invertible.
\begin{lem}The following conditions are equivalent.
\begin{enumerate}[\rm(a)]
\item $\sG(p, D, d_1,\ldots,d_r,\triv)$ has finite $G_{arith}$.
\item The pullback of $\sG(p, D, d_1,\ldots,d_r,\triv)$ to $\G_m^r$ has finite $G_{arith}$.
\end{enumerate}
If $p$ is invertible, the same equivalence holds for $\sG(p, D, d_1,\ldots,d_r,\chi_2)$.
\end{lem}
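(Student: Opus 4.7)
My plan is to reduce both directions to the surjectivity of the map on \'etale fundamental groups induced by the open immersion $j \colon \G_m^r \hookrightarrow \A^r$, rather than directly invoking the trace criterion of Proposition~\ref{arithfin}. Write $\rho$ for the monodromy representation of $\sG := \sG(p, D, d_1,\ldots,d_r,\triv)$ (and similarly for the $\chi_2$ version), so that $G_{arith}$ of $j^*\sG$ on $\G_m^r$ is the Zariski closure of $\im(\rho \circ j_*)$, while $G_{arith}$ of $\sG$ on $\A^r$ is the Zariski closure of $\im(\rho)$.

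The direction (a) $\Rightarrow$ (b) is purely formal: the image of $\rho \circ j_*$ is contained in the image of $\rho$, so its Zariski closure sits inside the finite group $G_{arith}$ of $\sG$, and is thus itself finite.

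For (b) $\Rightarrow$ (a), I would invoke the standard fact (SGA 1, Exp.~V) that for a dense open immersion $U \hookrightarrow V$ into a connected normal scheme, the induced map of \'etale fundamental groups is surjective. Applied geometrically to $(\G_m^r)_{\bar k} \hookrightarrow (\A^r)_{\bar k}$ this yields a surjection of geometric $\pi_1$'s. Combining with the five lemma applied to the homotopy exact sequences
\[
1 \to \pi_1^{geom}(\G_m^r) \to \pi_1(\G_m^r) \to \Gal(\bar k/k) \to 1,
\]
\[
1 \to \pi_1^{geom}(\A^r) \to \pi_1(\A^r) \to \Gal(\bar k/k) \to 1
\]
then upgrades the surjectivity to the arithmetic $\pi_1$'s, i.e.\ $j_* \colon \pi_1(\G_m^r) \twoheadrightarrow \pi_1(\A^r)$. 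Consequently $\im(\rho) = \im(\rho \circ j_*)$, their Zariski closures coincide as algebraic subgroups of $\GL_{D-1}(\overline{\Q_\ell})$, and finiteness of one $G_{arith}$ forces finiteness of the other. The argument for $\sG(p,D,d_1,\ldots,d_r,\chi_2)$ (when $p$ is odd) is literally the same, since it depends only on the pair of underlying schemes and not on the local system.

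I do not foresee any substantive obstacle: the argument is a direct two-step application of fundamental group surjectivity. The only point worth emphasising is that the surjectivity must be applied in both its geometric and arithmetic incarnations (via the homotopy exact sequence), which is what makes the two $G_{arith}$'s literally equal and not merely jointly finite-or-infinite.
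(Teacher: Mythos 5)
Your argument is correct and is essentially the paper's own: the paper's entire proof is that $\G_m^r$ is a dense open in $\A^r$, so $\pi_1(\G_m^r)\twoheadrightarrow\pi_1(\A^r)$, which makes (a)$\Leftrightarrow$(b) immediate. The only superfluous step in your write-up is the detour through geometric $\pi_1$'s and the five lemma: the SGA~1 surjectivity for dense open immersions into connected normal Noetherian schemes applies directly to $\G_m^r/k\hookrightarrow\A^r/k$, so you can conclude $j_*\colon\pi_1(\G_m^r)\twoheadrightarrow\pi_1(\A^r)$ in one step.
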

\begin{proof}
That (a) implies (b) is obvious. Because $\G_m^r$ is a dense open set of $\A^r$, $\pi_1(\G_m^r)$ maps onto $\pi_1(\A^r)$, hence (b) implies (a).
\end{proof}

We now form local systems $\sG_{big}(p, D, d_1,\ldots,d_r,\triv)$ and, if $p$ is odd, $\sG_{big}(p, D, d_1,\ldots,d_r,\chi_2)$ on $\G_m^{r+1}$, by letting the coefficient of $x^D$ also vary over invertible scalars. Thus the trace function of $\sG_{big}(p, D, d_1,\ldots,d_r,\triv)$ is 
$$(t_1,\ldots,t_{r+1}) \in \G_m(K)^{r+1} \mapsto \sum_{x \in K}\psi_K(t_{r+1}x^D + \sum_i t_i x^{d_i})/\alpha^{\deg(K/\F_p)}.$$
When $p$ is odd,  the trace function of $\sG_{big}(p, D, d_1,\ldots,d_r,\chi_2)$ is
$$(t_1,\ldots,t_{r+1}) \in \G_m(K)^{r+1} \mapsto \sum_{x \in K^\times}\psi_K(t_{r+1}x^D + \sum_i t_i x^{d_i})\chi_{2,K}(x)/\alpha^{\deg(K/\F_p)}.$$

\begin{lem}\label{big}The following conditions are equivalent.
\begin{enumerate}[\rm(a)]
\item $\sG_{big}(p, D, d_1,\ldots,d_r,\triv)$ on $\G_m^{r+1}$ has finite $G_{arith}$.
\item The pullback of $\sG(p, D, d_1,\ldots,d_r,\triv)$ to $\G_m^r$ has finite $G_{arith}$.
\end{enumerate}
If $p$ is invertible, the same equivalence holds with $\triv$ replaced by $\chi_2$.
\end{lem}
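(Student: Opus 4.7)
My plan is to relate $\sG_{big}$ on $\G_m^{r+1}$ to $\sG|_{\G_m^r}$ via the change of variables $y = vx$ in the inner exponential-sum variable, realized geometrically as a finite \'etale cover of $\G_m^{r+1}$, and then to deduce the equivalence of arithmetic monodromy finiteness by invoking Proposition~\ref{finmonocrit}.

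Introduce the finite \'etale map
$$\Phi\colon\G_m^{r+1}\to\G_m^{r+1},\qquad(u_1,\ldots,u_r,v)\mapsto(u_1 v^{d_1},\ldots,u_r v^{d_r},v^D)$$
of degree $D$: it is the composition of an automorphism of $\G_m^{r+1}$ (twisting the $u_i$ by $v^{d_i}$) with the $D$-th power map on the last factor, \'etale since $p\nmid D$. The substitution $y=vx$ is a bijection on $K$ (and on $K^{\times}$) and transforms the inner sum defining the trace of $\sG_{big}$ at $\Phi(u,v)$ into the sum defining the trace of $\sG|_{\G_m^r}$ at $u$; in the $\chi_2$ case one picks up an extra factor $\chi_{2,K}(v)$ via $\chi_2^{2}=\triv$. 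Hence for every finite $L/k$ and $(u,v)\in\G_m^{r+1}(L)$, the Frobenius trace of $\sG_{big}$ at $\Phi(u,v)$ equals the Frobenius trace of $\sG|_{\G_m^r}$ at $u$ (resp.~equals $\chi_{2,L}(v)\in\{\pm 1\}$ times that trace in the $\chi_2$ case); by Chebotarev the same identity holds for Frobenius eigenvalues, since these are determined by the semisimplification.

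Next apply Proposition~\ref{finmonocrit} to each of $\sG_{big}$ and $\sG|_{\G_m^r}$: both are geometrically irreducible and pure of weight zero, so finite $G_{arith}$ is equivalent to all Frobenius eigenvalues being algebraic integers. For (a)$\Rightarrow$(b), use the section $\iota(u)=(u,1)$, which gives $\sG|_{\G_m^r}=\iota^{*}\sG_{big}$, so each eigenvalue of $\sG|_{\G_m^r}$ is an eigenvalue of $\sG_{big}$. For the converse, given $t\in\G_m^{r+1}(L)$, pass to the finite extension $L'=L(t_{r+1}^{1/D})$ and set $u_i=t_i v^{-d_i}$ with $v^D=t_{r+1}$, so that $\Phi(u,v)=t$ in $\G_m^{r+1}(L')$. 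Since $\mathrm{Frob}_{L'}=\mathrm{Frob}_L^{[L':L]}$, the Frobenius eigenvalues of $\sG_{big}$ at $t$ over $L$ are $[L':L]$-th roots of those over $L'$, which by the identity above agree with the eigenvalues of $\sG|_{\G_m^r}$ at $u$ over $L'$ (up to a harmless $\pm 1$ in the $\chi_2$ case). As $n$-th roots of algebraic integers are algebraic integers, the algebraic-integer property transfers from $\sG|_{\G_m^r}$ to $\sG_{big}$, yielding (b)$\Rightarrow$(a).

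The chief technical point to verify is that Proposition~\ref{finmonocrit} applies to $\sG_{big}$, which amounts to its geometric irreducibility (purity is built into the half Tate twist). This should hold for the same reason as for $\sG$, namely that $\sG_{big}$ can be presented as a Fourier transform in $t_1$ (using $d_1=1$) of an explicit rank-one local system on $\A^1_{x}\times\G_m^r\times\G_m$; the verification is routine and not the heart of the argument.
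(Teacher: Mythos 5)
Your proof is correct and takes essentially the same route as the paper: both hinge on the same degree-$D$ finite \'etale cover of $\G_m^{r+1}$ given by $t_{r+1}\mapsto t_{r+1}^D$ (yours bundles in the harmless automorphism $u_i\mapsto u_iv^{d_i}$), and the same substitution $x\mapsto vx$ inside the exponential sum, which converts traces of $\sG_{big}$ over the cover into traces of $\sG|_{\G_m^r}$. The only real difference is in how you descend: you pass to the extension $L'=L(t_{r+1}^{1/D})$ pointwise, observe that the Frobenius eigenvalues over $L$ are $[L':L]$-th roots of those over $L'$, and use that roots of algebraic integers are algebraic integers, whereas the paper instead works with the pulled-back local system as a whole and notes that its $G_{arith}$ sits as a finite-index subgroup inside the $G_{arith}$ of $\sG_{big}$. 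The two mechanisms are interchangeable here; the paper's is a touch cleaner because it never requires one to track eigenvalue multisets over towers of extensions. One small remark: to get geometric irreducibility of $\sG_{big}$ you do not need the Fourier-transform detour you sketch at the end; it suffices to observe that the restriction of $\sG_{big}$ to the hyperplane $t_{r+1}=1$ is $\sG|_{\G_m^r}$, which is geometrically irreducible, and irreducibility on a subvariety forces irreducibility of the ambient local system.
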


\begin{proof} In both cases, it is obvious that (a) implies (b), since the second local system is the pullback of the first to the locus $t_{r+1}=1$.
To show that (b) implies (a), we argue as follows. Over a finite extension $K$ of $\F_p$, if we make the substitution $x \mapsto \lambda x$
with $\lambda \in K^\times$, the sum
$$ \sum_{x \in K}\psi_K(x^D + \sum_i t_i x^{d_i})/\alpha^{\deg(K/\F_p)}$$
is equal to the sum
$$ \sum_{x \in K}\psi_K(\lambda^d x^D + \sum_i \lambda^{d_i}t_i x^{d_i})/\alpha^{\deg(K/\F_p)}.$$
After the change of variable $t_i \mapsto t_i/\lambda^{d_i}$, this sum is 
$$ \sum_{x \in K}\psi_K(\lambda^d x^D +\sum_i t_i x^{d_i})/\alpha^{\deg(K/\F_p)},$$
still an algebraic integer. In other words, the pullback of $\sG_{big}(p, D, d_1,\ldots,d_r,\triv)$ on $\G_m^{r+1}$ to $\G_m^{r+1}$ by
the finite etale galois map
$$(t_1,\ldots,t_r, t_{r+1}) \mapsto (t_1,\ldots,t_r, t_{r+1}^D)$$
has all its traces algebraic integers, hence has finite $G_{arith}$. But under this finite etale map, the map of $\pi_1$'s makes the source
a subgroup of index $D$ in the target. Thus the $G_{arith}$ for $\sG_{big}(p, D, d_1,\ldots,d_r,\triv)$ contains a finite group as a subgroup of finite index, so is itself finite.

When $p$ is odd, and $\chi =\chi_2$, apply the identical argument. In this case, the $x \mapsto \lambda x$ substitution moves the sum by a factor $\chi_2(\lambda) = \pm 1$, so does not change the fact that the sum is an algebraic integer.
\end{proof}

The sums 
$$ \sum_{x \in K}\psi_K(t_{r+1}x^D + \sum_i t_i x^{d_i})$$
and, when $p$ is odd,  the sums
$$ \sum_{x \in K^\times}\psi_K(t_{r+1}x^D + \sum_i t_i x^{d_i})\chi_{2,K}(x),$$
lie in $\Z[\zeta_p]$. The quantity $\alpha$ in all cases has $\alpha^4=p^2$. The field $\Q(\zeta_p)$ has a unique place over $p$. So these sums will {\bf remain} algebraic integers when divided by $\alpha^{\deg(K/\F_p)}$ if and only if the divided sums be $p$-integral. Equivalently , whenever $K$ is $\F_q$, and $\ord_q$ is the $p$-adic ord, normalized to have $\ord_q(q)=1$, we must have
$$\ord_q\left( \sum_{x \in K =\F_q}\psi_K(t_{r+1}x^D + \sum_i t_i x^{d_i})\right) \ge 1/2,$$
and, when $p$ is odd, we must have
$$\ord_q\left( \sum_{x \in K^\times =\F_q^\times}\psi_K(t_{r+1}x^D + \sum_i t_i x^{d_i})\chi_{2,K}(x)\right) \ge 1/2,$$
for every finite extension $K/\F_p$ and every $r+1$ tuple $(t_1,\ldots,t_{r+1}) \in (K^\times)^{r+1}$.

We now give a generalization of  \cite[Theorem 1]{R-L} to these local systems. In the formulation, we make reference to the $\ord_q$ of various Gauss sums over variable $\F_q$. We view these sums as taking values in $\Q_p(\zeta_p)^{nr}$, the maximal unramified extension of
$\Q_p(\zeta_p)$ (i.e., we adjoin to $\Q_p(\zeta_p)$ all roots of unity of order prime to $p$). This field has a unique $p$-adic $\ord$.

\begin{thm} \label{mellintrick} We have the following results, in which we write $d_{r+1}:=D$.
\begin{enumerate}[\rm(i)]
\item $\sG_{big}(p, D, d_1,\ldots,d_r,\triv)$ has finite $G_{arith}$ if and only if the following condtion holds. 

For every finite extension $K=\F_q$ of $\F_p$, 
and for every $r+1$ tuple of (possibly trivial) multiplicative characters $(\rho_1,\ldots,\rho_{r+1})$ of $K^\times$, not all of which are trivial, such that $\prod_i \rho_i^{d_i} =\triv$, we have
$$\ord_{q}(\prod_i g(\psi_K,\rho_i)) \ge 1/2.$$
\item If $p$ is odd, then $\sG_{big}(p, D, d_1,\ldots,d_r,\chi_2)$ has finite $G_{arith}$ if and only if the following condition holds.

For every finite extension $K=\F_q$ of $\F_p$, 
and for every $r+1$ tuple of (possibly trivial) multiplicative characters $(\rho_1,\ldots,\rho_{r+1})$ of $K^\times$ such that $\prod_i \rho_i ^{d_i}=\chi_{2,K}$, we have
$$\ord_{q}(\prod_i g(\psi_K,\rho_i)) \ge 1/2.$$

\end{enumerate}
\end{thm}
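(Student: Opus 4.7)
The plan is to translate the integrality condition established just before the statement into a condition on Mellin coefficients in the variables $\vec t = (t_1,\ldots,t_{r+1})$. Writing $\Sigma(\vec t)$ for the inner exponential sum (with the extra $\chi_{2,K}(x)$ and the sum over $K^\times$ in case (ii)), finite $G_{arith}$ for $\sG_{big}$ is equivalent to the pointwise bound $\ord_q \Sigma(\vec t) \geq 1/2$ for every finite extension $K=\F_q/\F_p$ and every $\vec t \in (K^\times)^{r+1}$. I view $\Sigma$ as a function on the finite abelian group $(K^\times)^{r+1}$ and expand it in multiplicative characters,
$$\Sigma(\vec t) = \sum_{\vec\rho} c_{\vec\rho} \prod_i \rho_i(t_i), \qquad c_{\vec\rho} = \frac{1}{(q-1)^{r+1}}\sum_{\vec t \in (K^\times)^{r+1}} \Sigma(\vec t) \prod_i \overline{\rho_i}(t_i).$$
Because $q-1$ and every character value are $p$-adic units, both directions of Mellin inversion preserve $\ord_q$; the pointwise bound on $\Sigma$ is therefore equivalent to the coefficient-wise bound $\ord_q c_{\vec\rho} \geq 1/2$ for every $\vec\rho$.

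The second step is the direct computation of $c_{\vec\rho}$. Swapping the order of summation and, for each $x \in K^\times$, substituting $s_i = t_i x^{d_i}$, the $t_i$-sums decouple and yield $\prod_i g(\psi_K,\overline{\rho_i}) \cdot (\prod_i \rho_i^{d_i})(x)$. The outer sum over $x$ then forces $\prod_i \rho_i^{d_i} = \triv$ in case (i) (respectively $\prod_i \rho_i^{d_i} = \chi_{2,K}$ in case (ii)), and produces
$$c_{\vec\rho} = \frac{\prod_i g(\psi_K,\overline{\rho_i})}{(q-1)^r}$$
whenever that constraint holds, and $c_{\vec\rho}=0$ otherwise. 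Since $(q-1)^r$ is a unit and the involution $\rho_i \mapsto \overline{\rho_i}$ preserves the constraint, $\ord_q c_{\vec\rho} \geq 1/2$ becomes precisely $\ord_q \prod_i g(\psi_K,\rho_i) \geq 1/2$ as stated.

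The one case not covered by this analysis---and the only non-formal point---is the trivial tuple $\vec\rho=(\triv,\ldots,\triv)$ in case (i), which the theorem explicitly excludes. Here an extra $x=0$ contribution of $1$ combines with the $x \neq 0$ part to give
$$c_{\vec\triv} = 1 + \frac{(-1)^{r+1}}{(q-1)^r} = \frac{(q-1)^r + (-1)^{r+1}}{(q-1)^r}.$$
Binomial expansion of the numerator cancels the constant terms $(-1)^r + (-1)^{r+1}=0$ and leaves $\sum_{k \geq 1}\binom{r}{k} q^k (-1)^{r-k}$, which has $\ord_q \geq 1$; hence $\ord_q c_{\vec\triv} \geq 1 \geq 1/2$ is automatic, justifying the exclusion in the statement. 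In case (ii) the constraint $\prod_i \rho_i^{d_i} = \chi_{2,K} \neq \triv$ is incompatible with $\vec\rho = \vec\triv$ and the sum is already restricted to $K^\times$, so no analogous bookkeeping is needed. Thus the argument reduces to a single Mellin inversion plus the Gauss-sum substitution, with the $\vec\triv$ bookkeeping in (i) as the sole delicate point.
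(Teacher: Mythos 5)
Your proof is correct and follows essentially the same route as the paper's: reduce the $p$-integrality condition to a coefficient-wise bound via Mellin inversion (noting $q-1$ is a $p$-adic unit), evaluate the Mellin coefficients via a change of variable that reveals Gauss sums and the constraint $\prod_i\rho_i^{d_i}=\triv$ (resp.\ $=\chi_{2,K}$), and handle the all-trivial tuple in case (i) by a separate divisibility-by-$q$ computation. The only cosmetic difference from the paper is your use of $\overline{\rho_i}$ rather than $\rho_i$ in the transform kernel, which you correctly observe is harmless because the constraint set is conjugation-stable.
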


\begin{proof}We first explain the underlying idea. For a fixed $K=\F_q$, we have a function on $(K^\times)^{r+1}$, say
$$F(t_1,\ldots,t_{r+1}),$$
whose values lie in $\Z[\zeta_p]$. We wish to show that each divided value $F(t_1,\ldots,t_{r+1})/\alpha^{\deg(K/\F_p)}$ remains
an algebraic integer, or equivalently that 
$$\ord_q(F(t_1,\ldots,t_{r+1})) \ge 1/2.$$
 For this, we consider the Mellin transform: for each $r+1$ tuple of multiplicative characters $(\rho_1,\ldots,\rho_{r+1})$ of $K^\times$, we look at the the sum
$$\Mellin_F(\rho_1,\ldots,\rho_{r+1}) :=\sum_{(t_1,\ldots,t_{r+1}) \in (K^\times)^{r+1}}F(t_1,\ldots,t_{r+1})\prod_i \rho_i(t_i).$$
We can recover $F$ from $\Mellin_F$ by usual Mellin inversion, which involves division by $(q-1)^{r+1}$, a quantity prime to $p$.
So it suffices to show that each value $\Mellin_F(\rho_1,\ldots,\rho_{r+1})$ has $\ord_q \ge 1/2.$

We first treat assertion (i). The function $F(t_1,\ldots,t_{r+1})$ at hand is 
$$F(t_1,\ldots,t_{r+1}) := \sum_{x \in K}\psi_K(t_{r+1}x^D + \sum_{i=1}^r t_i x^{d_i})=$$
$$=\sum_{x \in K}\psi_K( \sum_{i=1}^{r+1} t_i x^{d_i}) = 1 + F^\times(t_1,\ldots,t_{r+1}),$$
with
$$F^\times(t_1,\ldots,t_{r+1}):=\sum_{x \in K^\times}\psi_K( \sum_{i=1}^{r+1} t_i x^{d_i}).$$

When all the $\rho_i$ are trivial, we have
$$\Mellin_F(\triv,\ldots,\triv) = (q-1)^{r+1} + \Mellin_{F^\times}(\triv,\ldots,\triv),$$
and
$$ \Mellin_{F^\times}(\triv,\ldots,\triv)=\sum_{(t_1,\ldots,t_{r+1}) \in (K^\times)^{r+1}}F^\times(t_1,\ldots,t_{r+1})=$$
$$=\sum_{x \in K^\times}\sum_{(t_1,\ldots,t_{r+1}) \in (K^\times)^{r+1}}\psi_K( \sum_{i=1}^{r+1} t_i x^{d_i})=$$
$$=\sum_{x \in K^\times}\prod_i (\sum_{t_i \in K^\times}\psi_K( t_i x^{d_i})).$$
Each of the $r+1$ summands inside the product is equal to $-1$, because $ x^{d_i}$ is nonzero, so $t_i \mapsto \psi_K( t_i x^{d_i})$ is 
a nontrivial additive character of $K$, and we sum over the nonzero elements. So we find that
$$ \Mellin_{F^\times}(\triv,\ldots,\triv) = (q-1)(-1)^{r+1},$$
and hence
$$\Mellin_F(\triv,\ldots,\triv) = (q-1)^{r+1} + (q-1)(-1)^{r+1},$$
which is divisible by $q$.

When not all the $\rho_i$ are trivial, the constant term of $F$ dies, and we have
$$\Mellin_F(\rho_1\ldots,\rho_{r+1}) =\Mellin_{F^\times}(\rho_1\ldots,\rho_{r+1})=$$
$$=\sum_{x \in K^\times}\prod_i \left(\sum_{t_i \in K^\times}\psi_K( t_i x^{d_i})\rho_i(t_i)\right).$$
Here each of the $r+1$ summands inside the product is easily expressed in terms of Gauss sums:
$$\sum_{t_i \in K^\times}\psi_K( t_i x^{d_i})\rho_i(t_i)= \overline{\rho_i}(x^{d_i})g(\psi_K,\rho_i).$$
So we get
$$\Mellin_F(\rho_1\ldots,\rho_{r+1}) =(\prod_i g(\psi_K,\rho_i))\sum_{x \in K^\times}(\prod_i \overline{\rho_i^{d_i}})(x).$$

The sum over $x \in K^\times$ vanishes unless $\prod_i \rho_i^{d_i} =\triv$. If $\prod_i \rho_i ^{d_i}=\triv$, then we get $(q-1)(\prod_i g(\psi_K,\rho_i))$.

The proof of case (ii) is analogous. Here $F$ is already $F^\times$, and the final formula is
$$\Mellin_F(\rho_1\ldots,\rho_{r+1}) =(\prod_i g(\psi_K,\rho_i))\sum_{x \in K^\times}(\prod_i \overline{\rho_i^{d_i}})(x)\chi_{2,K}(x).$$
\end{proof}

We now reformulate the previous Theorem \ref{mellintrick} in terms of Kubert's $V$ function
$$V: (\Q/\Z)_{prime \ to \ p} \rightarrow [0,1).$$
For $\F_{p^f}$ a finite extension of $\F_p$, and $x \in (\Q/\Z)_{prime \ to \ p}$ with $(p^f-1)x \in \Z$, we have
$$V(x):= \ord_{p^f}(g(\psi_{\F_{p^f}},\Teich^{-x(p^f-1)})),$$
for $$\Teich_{p^f}:\F_{p^f}^\times \cong \mu_{p^f -1}(\Q_p^{nr})$$
the Teichmuller character, characterized by the requirement that for any $x \in \F_{p^f}^\times$, $\Teich_{p^f}(x)$ lifts $x$.
For such an $x$, we have the Stickelberger formula
$$V(x) = (1/f)\sum_{i {\rm (mod }\ f)}<p^ix>.$$

It will also be convenient to introduce a slight variant of Kubert's $V$ function, $V_{RL}$, defined by
$$V_{RL}(x) = V(x) {\rm \ for \ } x \neq 0,\ V_{RL}(0) =1.$$ 
The advantage of this is that the property of the $V$ function
$$V(x) + V(-x) = 1 {\rm \ if \ }x\neq 0$$
becomes the formula
$$V(x) + V_{RL}(-x) =1, {\rm \ for  \  all \ } x.$$

Thus we may reformulate Theorem \ref{mellintrick} as follows, where we ``solve" for $x_1$ in terms of $(x_2,\ldots,x_{r+1})$.

\begin{thm}\label{Vmellin}We have the following results.
\begin{enumerate}[\rm(i)]
\item $\sG_{big}(p, D, d_1,\ldots,d_r,\triv)$ has finite $G_{arith}$ if and only if the following condtion holds. For every list
$(x_2,\ldots,x_{r+1})$ of elements of $(\Q/\Z)_{prime\  to \ p}$ which are not all $0$,
we have the inequality
$$\sum_{i \ge 2}V(x_i) + 1/2 \ge V_{RL}(\sum_{i \ge 2} d_ix_i).$$
\item If p is odd, then $\sG_{big}(p, D, d_1,\ldots,d_r,\chi_2)$ has finite $G_{arith}$ if and only if the following condtion holds. 
For every list of elements
$(x_2,\ldots,x_{r+1})$ of elements of $(\Q/\Z)_{prime \ to \ p}$,
we have the inequality
$$\sum_{i \ge 2}V(x_i) + 1/2 \ge V_{RL}(1/2 + \sum_{i \ge 2} d_ix_i).$$
\end{enumerate}

\end{thm}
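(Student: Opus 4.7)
The plan is to deduce Theorem \ref{Vmellin} from Theorem \ref{mellintrick} as a purely formal translation via Kubert's $V$ function. The starting point is the standard parametrization: for $K = \F_q$ with $q = p^f$, the map $x \mapsto \Teich_q^{-x(q-1)}$ identifies the set $\{x \in \Q/\Z : (q-1)x \in \Z\}$ with the character group of $K^\times$, with $x = 0$ corresponding to the trivial character and, when $p$ is odd, $x = 1/2$ corresponding to $\chi_{2,K}$. Under this dictionary, $\rho_i \leftrightarrow x_i$, the product $\prod_i \rho_i^{d_i}$ corresponds to $\sum_i d_i x_i \bmod \Z$, and by the definition of $V$ together with the identity $g(\psi_K,\triv) = -1$ one has $\ord_q(g(\psi_K, \rho_i)) = V(x_i)$ for all $x_i$ (using the convention $V(0) = 0$ built into the Stickelberger formula).

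Next I would use the hypothesis $d_1 = 1$ to ``solve for $x_1$.'' In case (i), the constraint $\prod_i \rho_i^{d_i} = \triv$ reads $x_1 \equiv -\sum_{i \ge 2} d_i x_i \pmod \Z$, while in case (ii), $\prod_i \rho_i^{d_i} = \chi_{2,K}$ reads $x_1 \equiv 1/2 - \sum_{i \ge 2} d_i x_i \pmod \Z$. As $K = \F_q$ ranges over finite extensions of $\F_p$, the tuples $(x_2,\ldots,x_{r+1})$ with $(q-1)x_i \in \Z$ sweep out all lists of elements of $(\Q/\Z)_{\text{prime to }p}$. So the inequality from Theorem \ref{mellintrick} becomes
\[
V(x_1) + \sum_{i \ge 2} V(x_i) \ge 1/2,
\]
with $x_1$ determined by the constraint in each case.

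It remains to repackage $V(x_1)$ in terms of the expression $\sum_{i \ge 2} d_i x_i$. When $x_1 \ne 0$, the reflection identity $V(x_1) + V(-x_1) = 1$ gives $V(x_1) = 1 - V_{RL}(-x_1)$, which in case (i) reads $V(x_1) = 1 - V_{RL}(\sum_{i \ge 2} d_i x_i)$ and in case (ii) reads $V(x_1) = 1 - V_{RL}(1/2 + \sum_{i \ge 2} d_i x_i)$ (using $-1/2 \equiv 1/2 \pmod \Z$). Substituting, the inequality rearranges to precisely the form claimed in the theorem. The boundary case $x_1 = 0$ is where the convention $V_{RL}(0) = 1$ earns its keep: in case (i) the exclusion ``not all $\rho_i$ trivial'' from Theorem \ref{mellintrick} matches the exclusion ``not all $x_i$ zero'' in \ref{Vmellin}(i), and one checks directly that the inequality $\sum_{i \ge 2} V(x_i) + 1/2 \ge 1 = V_{RL}(0)$ is the correct translation of the ord bound when $\rho_1 = \triv$ (so $g(\psi_K,\rho_1) = -1$ contributes $0$ to the ord). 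In case (ii) the constraint $\prod \rho_i^{d_i} = \chi_{2,K}$ automatically prevents all characters from being trivial, so no further exclusion is needed and the universal quantifier over $(x_2,\ldots,x_{r+1})$ is unrestricted.

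There is no genuine obstacle here beyond careful bookkeeping; the substantive content lives in Theorem \ref{mellintrick}, and the role of $V_{RL}$ (as opposed to $V$) is precisely to let the $x_1 = 0$ and $x_1 \ne 0$ cases be written as a single uniform inequality.
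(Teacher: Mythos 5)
Your proposal is correct and is exactly the ``solving for $x_1$'' reformulation the paper intends (the paper gives no separate proof of Theorem \ref{Vmellin}, just the one-line remark preceding it). The dictionary between characters and elements of $(\Q/\Z)_{\text{prime to }p}$, the identity $V(x)+V_{RL}(-x)=1$ to absorb the $x_1=0$ boundary case, and the observation that the constraint $\prod\rho_i^{d_i}=\chi_{2,K}$ automatically rules out the all-trivial tuple, are all as the paper intends.
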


We now recall the explicit ``sum of digits" recipe for $V$ and for $V_{RL}$, cf. \cite[Appendix]{Ka-RL}. For an integer $y$, and a power $p^f$ of $p$, we define
$$[y]_{p,f,-}$$ 
to be the sum of the $p$-adic digits of the representative of $y \mod p^f-1$ in $[0,p^f -2]$, and we define
$$[y]_{p,f}$$ 
to be the sum of the $p$-adic digits of the representative of $y \mod p^f-1$ in $[1,p^f -1]$.
Then we have
$$V\left(\frac{y}{p^f-1}\right) = \frac{1}{f(p-1)}[y]_{p,f,-},$$
$$V_{RL}\left(\frac{y}{p^f-1}\right) = \frac{1}{f(p-1)}[y]_{p,f}.$$

With this notation, Theorem \ref{Vmellin} can be restated as

\begin{thm}\label{Vmellin2}We have the following results.
\begin{enumerate}[\rm(i)]
\item $\sG_{big}(p, D, d_1,\ldots,d_r,\triv)$ has finite $G_{arith}$ if and only if the following condtion holds. For every positive integer $f$ and every $r$-tuple of integers
$0\leq x_2,\ldots,x_{r+1} < p^f-1$ which are not all $0$,
we have the inequality
\begin{equation}\label{V2eq1}\left[\sum_{i=2}^{r+1}d_ix_i\right]_{p,f}\leq\sum_{i=2}^{r+1}[x_i]_{p,f,-} + \frac{f(p-1)}{2}\end{equation}
\item If p is odd, then $\sG_{big}(p, D, d_1,\ldots,d_r,\chi_2)$ has finite $G_{arith}$ if and only if the following condtion holds. 
For every positive integer $f$ and every $r$-tuple of integers
$0\leq x_2,\ldots,x_{r+1} < p^f-1$,
we have the inequality
\begin{equation}\label{V2eq2}\left[\sum_{i=2}^{r+1}d_ix_i+\frac{p^f-1}{2}\right]_{p,f}\leq\sum_{i=2}^{r+1}[x_i]_{p,f,-} + \frac{f(p-1)}{2}.\end{equation}
\end{enumerate}

\end{thm}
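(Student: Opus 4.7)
The plan is to translate the inequalities of Theorem \ref{Vmellin} into the sum-of-digits form \eqref{V2eq1}, \eqref{V2eq2} by substituting the recipes for $V$ and $V_{RL}$ just recorded. The key re-indexing step is this: any list $(x_2, \ldots, x_{r+1})$ in $(\Q/\Z)_{prime \ to \ p}$ can be written as $x_i = y_i/(p^f - 1)$ for a common $f \ge 1$---take $f$ to be the lcm of the multiplicative orders of $p$ modulo the denominators of the $x_i$---with integers $y_i \in [0, p^f - 1)$; conversely each pair $(f, (y_i))$ determines such a list. Because both sides of the inequality in Theorem \ref{Vmellin} are intrinsic to the $x_i$'s (independent of which valid $f$ is chosen), quantifying over lists $(x_i)$ is equivalent to quantifying over pairs $(f, (y_i))$.

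For case (i), the recipes give $V(x_i) = [y_i]_{p,f,-}/(f(p-1))$, and since $\sum_{i \ge 2} d_i x_i = (\sum_{i \ge 2} d_i y_i)/(p^f - 1)$, also $V_{RL}(\sum_i d_i x_i) = [\sum_i d_i y_i]_{p,f}/(f(p-1))$. Multiplying the inequality of Theorem \ref{Vmellin}(i) through by $f(p-1)$ yields \eqref{V2eq1}; the condition ``not all $x_i = 0$'' becomes ``not all $y_i = 0$''. For case (ii), with $p$ odd, $p^f - 1$ is even so $1/2 = ((p^f-1)/2)/(p^f - 1)$, giving $V_{RL}(1/2 + \sum d_i x_i) = [(p^f - 1)/2 + \sum d_i y_i]_{p,f}/(f(p-1))$; multiplying by $f(p-1)$ produces \eqref{V2eq2}, and no ``all-zero'' restriction appears, matching Theorem \ref{Vmellin}(ii).

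The translation is purely mechanical and presents no substantive obstacle; the only point warranting attention is the degenerate case where $\sum d_i x_i$ (or $1/2 + \sum d_i x_i$) is zero in $\Q/\Z$. There the representative modulo $p^f - 1$ in $[1, p^f - 1]$ is $p^f - 1$ itself, with base-$p$ expansion $\overline{(p-1)(p-1)\cdots(p-1)}$ and digit sum $f(p-1)$, matching $V_{RL}(0) = 1$ under the recipe. Thus the bookkeeping is consistent in both the generic and degenerate cases, and the stated equivalences follow.
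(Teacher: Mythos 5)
Your proposal is correct and takes the same approach the paper implicitly uses: the paper offers no separate proof of Theorem~\ref{Vmellin2}, simply asserting that Theorem~\ref{Vmellin} ``can be restated as'' it via the digit-sum recipes for $V$ and $V_{RL}$. You have spelled out that mechanical translation---parametrizing $(\Q/\Z)_{\mathrm{prime\ to\ }p}$ by pairs $(f, y_i/(p^f-1))$, substituting the recipes, clearing the $f(p-1)$ denominator, and checking the $V_{RL}(0)=1$ edge case---all of which is sound.
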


We also have one further criterion that involves the simpler function $[x]_p:=$ sum of the $p$-adic digits of $x$. We first prove the following

\begin{lem}[Hasse-Davenport relation]\label{HD}
 Let $f,k$ be positive integers and $x\in{\mathbb Z}$. Then we have
 $$
 \left[\frac{p^{fk}-1}{p^f-1}x\right]_{p,fk}=k\cdot[x]_{p,f}
 $$
 and
 $$
 \left[\frac{p^{fk}-1}{p^f-1}x\right]_{p,fk,-}=k\cdot[x]_{p,f,-}
 $$
\end{lem}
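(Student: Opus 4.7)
The plan is to exploit the factorization
$$\frac{p^{fk}-1}{p^f-1} = 1 + p^f + p^{2f} + \cdots + p^{(k-1)f},$$
under which multiplication by this integer is a ``shift-and-concatenate'' operation: applied to any integer whose base-$p$ expansion fits in $f$ digits, it produces the integer whose $fk$-digit base-$p$ expansion is $k$ consecutive copies of the original digit string. Both identities of the lemma will fall out of this single observation.

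For the non-``$-$'' version, let $r \in [1, p^f-1]$ be the representative of $x$ modulo $p^f-1$ used to define $[x]_{p,f}$, so that $[x]_{p,f}$ equals the ordinary base-$p$ digit sum $[r]_p$. Set $y := r\cdot\tfrac{p^{fk}-1}{p^f-1}$. Since $x - r \in (p^f-1)\Z$ we have $y \equiv \tfrac{p^{fk}-1}{p^f-1}\,x \pmod{p^{fk}-1}$, and the trivial bounds $1 \le r \le p^f-1$ give $1 \le y \le p^{fk}-1$; hence $y$ is precisely the representative used to compute $[\tfrac{p^{fk}-1}{p^f-1}x]_{p,fk}$. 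Writing $r = \sum_{i=0}^{f-1}a_i\, p^i$ with $0 \le a_i \le p-1$,
$$y \;=\; \sum_{j=0}^{k-1} r\,p^{jf} \;=\; \sum_{j=0}^{k-1}\sum_{i=0}^{f-1} a_i\, p^{jf+i},$$
and because the exponents $jf+i$ are pairwise distinct and exhaust $[0,fk-1]$, this is literally the base-$p$ expansion of $y$. Its digit sum is therefore $k\sum_i a_i = k[r]_p = k[x]_{p,f}$, which is the first identity.

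The ``$-$'' version is proved identically, with $r$ taken in $[0, p^f-2]$; then the same calculation gives $y \in [0, p^{fk}-2]$, so $y$ is the representative used to define $[\tfrac{p^{fk}-1}{p^f-1}x]_{p,fk,-}$, and the digit-expansion argument is unchanged. The only place any care is needed is in matching the two endpoint conventions under multiplication by $\tfrac{p^{fk}-1}{p^f-1}$, which is forced by the elementary calculations $1\cdot\tfrac{p^{fk}-1}{p^f-1} = 1 + p^f + \cdots + p^{(k-1)f} \ge 1$ and $(p^f-1)\cdot\tfrac{p^{fk}-1}{p^f-1} = p^{fk}-1$. Beyond this bookkeeping there is no real obstacle; once the factorization of $\tfrac{p^{fk}-1}{p^f-1}$ is noted, everything reduces to reading off digits.
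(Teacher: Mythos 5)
Your proof is correct and takes essentially the same approach as the paper: reduce to a canonical representative modulo $p^f-1$, and observe that multiplication by $\frac{p^{fk}-1}{p^f-1}=1+p^f+\cdots+p^{(k-1)f}$ concatenates $k$ copies of the $f$-digit base-$p$ expansion, so the digit sum multiplies by $k$. You are slightly more explicit than the paper about the two representative ranges $[1,p^f-1]$ versus $[0,p^f-2]$ for the two conventions, but the underlying argument is identical.
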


\begin{proof}
 If $x\equiv y$ (mod $p^f-1$) then $\frac{p^{fk}-1}{p^f-1}x\equiv\frac{p^{fk}-1}{p^f-1}y$ (mod $p^{fk}-1$), so it suffices to prove it for $0\leq x < p^f-1$ (so $\frac{p^{fk}-1}{p^f-1}x<p^{fk}-1$). But then the result is clear since the $p$-adic expansion of $\frac{p^{fk}-1}{p^f-1}x$ is the concatenation of $k$ copies of the $p$-adic expansion of $x$ (filled with leading 0's so that it has exactly $f$ digits). 
\end{proof}

For use below, we recall the following result from \cite[Prop. 2.2]{Ka-RL}, whose inequalities are used in the proof of Theorem \ref{Vmellin3}.

\begin{prop}\label{inequality}
For strictly positive integers $x$ and $y$, and any $f \ge 1$, we have:
\begin{enumerate}[\rm(i)]
 \item $[x+y]_{p}\leq [x]_{p}+[y]_{p}$;
 \item $[x]_{p,f} \le [x]_p$;
 \item $[px]_{p}=[x]_{p}$.
\end{enumerate}
\end{prop}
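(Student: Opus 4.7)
The plan is to tackle the three parts in the order (iii), (i), (ii), since (iii) is essentially free, (i) will feed into (ii), and (ii) requires the most bookkeeping.

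Part (iii) is immediate: if $x = \sum_{i \ge 0} a_i p^i$ is the base-$p$ expansion, then $px = \sum_{i \ge 0} a_i p^{i+1}$ is already the base-$p$ expansion of $px$, with the same list of nonzero digits.

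For (i), the cleanest route is via Legendre's formula $\ord_p(n!) = (n-[n]_p)/(p-1)$, which gives
\[
[x]_p + [y]_p - [x+y]_p = (p-1)\bigl(\ord_p((x+y)!) - \ord_p(x!) - \ord_p(y!)\bigr) = (p-1)\,\ord_p\!\binom{x+y}{x} \ge 0.
\]
Equivalently, one may just inspect base-$p$ addition directly: every carry converts a column contribution $p + d$ into the digit $d$ plus a carry of $1$ into the next column, a net loss of exactly $p-1$ in the running digit total.

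The main work is (ii). Expand $x$ in base $p^f$ as $x = \sum_{i \ge 0} a_i p^{fi}$ with $0 \le a_i \le p^f - 1$. Because the blocks $a_i p^{fi}$ occupy disjoint ranges of base-$p$ positions, $[x]_p = \sum_i [a_i]_p$. Set $s_0 := \sum_i a_i$, so $s_0 \equiv x \pmod{p^f-1}$; iterated application of (i) gives $[s_0]_p \le \sum_i [a_i]_p = [x]_p$. If $s_0 \ge p^f$, repeat the base-$p^f$ digit-sum step to produce $s_1 < s_0$ with $s_1 \equiv x \pmod{p^f-1}$ and $[s_1]_p \le [s_0]_p$; since $s_k > 0$ implies $s_{k+1} > 0$ (a positive integer has at least one positive base-$p^f$ digit), the iteration stays strictly positive and strictly decreases while $s_k \ge p^f$. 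After finitely many steps we reach some $s^* \in [1,p^f-1]$ with $s^* \equiv x \pmod{p^f-1}$ and $[s^*]_p \le [x]_p$.

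The subtle point, and the main obstacle I expect, is matching $s^*$ against the canonical representative used to define $[x]_{p,f}$, which lies in $[1,p^f-1]$ rather than $[0,p^f-2]$. If $1 \le s^* \le p^f-2$ then $s^*$ is already that representative and $[x]_{p,f} = [s^*]_p \le [x]_p$; if $s^* = p^f-1$ then $s^* \equiv 0 \pmod{p^f-1}$, the required representative is again $p^f-1$, and $[x]_{p,f} = [p^f-1]_p = f(p-1) = [s^*]_p \le [x]_p$. Keeping $s_k > 0$ throughout the iteration is exactly what rules out a spurious termination at $s^*=0$, and this is the only place the hypothesis $x > 0$ enters.
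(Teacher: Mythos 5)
The paper itself does not prove Proposition~\ref{inequality}; it simply quotes it from \cite[Prop.~2.2]{Ka-RL}, so there is no in-paper proof to compare against. Your proof is correct on its own terms. Part~(iii) is as trivial as you say, and for part~(i) either the Legendre/binomial-coefficient argument or the carry-counting argument works (the carry argument in fact gives the exact identity $[x]_p + [y]_p - [x+y]_p = (p-1)\cdot(\text{number of carries})$). For part~(ii), the iterated base-$p^f$ digit-sum idea is sound: at each stage $s_k \equiv x \pmod{p^f-1}$ because $p^{fi}\equiv 1$; the digit-sum does not increase by repeated use of~(i) together with the observation that the base-$p^f$ blocks occupy disjoint base-$p$ positions; the sequence strictly decreases while $s_k \ge p^f$ (some high digit is nonzero, and removing $b_i(p^{fi}-1)$ strictly lowers the value); and $s_k>0$ is preserved. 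You also correctly isolate the one delicate point, namely that the canonical representative in the definition of $[\,\cdot\,]_{p,f}$ is taken in $[1,p^f-1]$, so the terminal value $s^*=p^f-1$ must be handled separately, and the positivity hypothesis on $x$ is exactly what rules out landing on $0$. The argument is complete.
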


\begin{thm}\label{Vmellin3}We have the following results.
\begin{enumerate}[\rm(i)]
\item $\sG_{big}(p, D, d_1,\ldots,d_r,\triv)$ has finite $G_{arith}$ if and only if there exists some real $A \geq 0$ such that for every positive integer $f$ and every $r$-tuple of integers
$0\leq x_2,\ldots,x_{r+1} < p^f-1$ which are not all $0$,
we have the inequality
\begin{equation}\label{V3eq1}\left[\sum_{i=2}^{r+1}d_ix_i\right]_p\leq\sum_{i=2}^{r+1}[x_i]_p + \frac{f(p-1)}{2}+A.\end{equation}
\item If p is odd, then $\sG_{big}(p, D, d_1,\ldots,d_r,\chi_2)$ has finite $G_{arith}$ if and only if there exists some real $A\geq 0$ such that for every positive integer $f$ and every $r$-tuple of integers
$0\leq x_2,\ldots,x_{r+1} < p^f-1$,
we have the inequality
\begin{equation}\label{V3eq2}\left[\sum_{i=2}^{r+1}d_ix_i+\frac{p^f-1}{2}\right]_{p}\leq\sum_{i=2}^{r+1}[x_i]_{p} + \frac{f(p-1)}{2}+A.\end{equation}
\end{enumerate}

\end{thm}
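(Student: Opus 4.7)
The plan is to show that each of the conditions in Theorem \ref{Vmellin3} is equivalent to the corresponding condition in Theorem \ref{Vmellin2}, which already characterizes finite $G_{arith}$. There are two directions to establish, and the two cases (trivial character and $\chi_2$) can be handled in parallel.

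For the implication \ref{Vmellin3} $\Rightarrow$ \ref{Vmellin2}, I would use the Hasse--Davenport identity of Lemma \ref{HD} to asymptotically remove the slack $A$. Given $f$ and a valid tuple $(x_2,\ldots,x_{r+1})$, for any integer $k\ge1$ I set $y_i := \frac{p^{fk}-1}{p^f-1}\,x_i \in [0,p^{fk}-2]$. Because $x_i < p^f$, the $p$-adic expansion of $y_i$ is $k$ disjoint copies of that of $x_i$, so $[y_i]_p = k[x_i]_p$. Writing $T := \sum d_i x_i$ in case (i), or $T := (p^f-1)/2 + \sum d_i x_i$ in case (ii), a direct computation shows the analogue of ``$T$'' for the tuple $(y_i)$ at level $fk$ equals $\frac{p^{fk}-1}{p^f-1}T$. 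Lemma \ref{HD} then gives $\bigl[\tfrac{p^{fk}-1}{p^f-1}T\bigr]_{p,fk} = k[T]_{p,f}$, while Proposition \ref{inequality}(ii) gives $[\,\cdot\,]_{p,fk} \le [\,\cdot\,]_p$. Applying the hypothesis of \ref{Vmellin3} at the enlarged parameters $(fk,y_i)$ and chaining these inequalities produces
$$ k[T]_{p,f} \;\le\; k\sum [x_i]_{p,f,-} + \tfrac{fk(p-1)}{2} + A. $$
Dividing by $k$ and letting $k\to\infty$ absorbs $A/k$ (the LHS is a fixed rational) and yields the inequality of \ref{Vmellin2}.

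For the converse, I would apply Theorem \ref{Vmellin2} at the enlarged parameter $g := f + C$ with the same tuple $(x_2,\ldots,x_{r+1})$, choosing $C$ depending only on $p$ and $M := \sum_{i=2}^{r+1} d_i$ so that $p^C > 2M+1$. Then $(p^g-1)/2 + \sum d_i x_i < p^g - 1$ (and a fortiori $\sum d_i x_i < p^g - 1$), so the relevant $[\,\cdot\,]_{p,g}$ agrees with $[\,\cdot\,]_p$; also $[x_i]_{p,g,-} = [x_i]_p$ since $x_i < p^f \le p^g - 2$. In case (i) the output of \ref{Vmellin2}(i) at $g$ reads $[\sum d_i x_i]_p \le \sum [x_i]_p + f(p-1)/2 + C(p-1)/2$, so $A := C(p-1)/2$ works. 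In case (ii), setting $T := (p^f-1)/2 + \sum d_i x_i$ (the target) and $B := \sum_{j=f}^{g-1} \tfrac{p-1}{2}\,p^j$, one has $T+B = (p^g-1)/2 + \sum d_i x_i$ controlled by \ref{Vmellin2}(ii), together with $[B]_p = C(p-1)/2$. The standard carry identity $[T]_p + [B]_p = [T+B]_p + (p-1)c$, with $c$ the number of carries in the base-$p$ addition of $T$ to $B$, reduces the task to bounding $c$ by a constant.

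The main obstacle is this carry count in case (ii). The digits of $B$ are all $(p-1)/2$ at positions $f,\ldots,g-1$ and zero elsewhere, and since $T < (M+1)p^f$, the digits of $T$ vanish at positions $\ge f + \lceil\log_p(M+1)\rceil$. For positions $j<f$ an easy induction shows no carry is generated ($B$ contributes $0$); for positions $j \ge f + \lceil\log_p(M+1)\rceil$ one has $b_j + c_{j-1} \le (p-1)/2 + 1 < p$, so no new carry is generated there either. Hence $c \le \lceil\log_p(M+1)\rceil$, a constant depending only on $p$ and the $d_i$, and we may take $A := (p-1)\lceil\log_p(M+1)\rceil$. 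This completes the equivalence and hence the theorem.
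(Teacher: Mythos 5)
Your proof is correct. The forward direction (Vmellin3 implies Vmellin2) by replicating digit strings via Lemma \ref{HD} and absorbing $A$ by letting $k\to\infty$, and the converse of case (i) by passing to level $g=f+C$ so that $[\,\cdot\,]_{p,g}$ agrees with $[\,\cdot\,]_p$, both follow the paper's proof almost verbatim. The one place you diverge is the converse of case (ii). The paper avoids carry-counting by an algebraic rewriting: it observes that $p^{f+l}+T = p^f\tfrac{p^l+1}{2}+Y$ with $Y=\sum d_ix_i+\tfrac{p^{f+l}-1}{2}$, uses $[p^{f+l}+T]_p=1+[T]_p$ (since $T<p^{f+l}$), applies subadditivity $[a+b]_p\le[a]_p+[b]_p$ and the identity $\bigl[\tfrac{p^l+1}{2}\bigr]_p=1+\tfrac{l(p-1)}{2}$, and then feeds $Y$ into Vmellin2(ii) at level $f+l$; all carry losses are silently absorbed in the $\le$ of subadditivity. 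You instead make the carry loss explicit via the Legendre/Kummer identity $[T]_p+[B]_p=[T+B]_p+(p-1)c$ and bound $c$ directly by showing carries can only be generated in the window of positions $f\le j<f+\lceil\log_p(M+1)\rceil$. Both routes are valid: the paper's version is slicker and avoids tracking carries, while yours makes the exact loss visible (and incidentally shows the paper's constant $A=l(p-1)$ can be improved to $(p-1)\lceil\log_p(M+1)\rceil$). For completeness you should note, as the paper does, that $x_i<p^f-1$ forces the replicated $y_i<p^{fk}-1$, so the scaled tuples are admissible inputs at level $fk$; you assert this but it is worth recording explicitly.
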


\begin{proof}
 We will prove that the hypotheses of this theorem are equivalent to those of Theorem \ref{Vmellin2}. 
 
 Suppose that there exists $A\geq 0$ such that (\ref{V3eq1}) holds for every $f \ge 1$ and every $ 0 \le x_2,\ldots,x_{r+1} < p^f-1$ which are not all zero. Then $\sum_{i=2}^{r+1}d_ix_i  >0$, and 
$$\begin{aligned}
 \left[\sum_{i=2}^{r+1}d_ix_i\right]_{p,f}& \leq\left[\sum_{i=2}^{r+1}d_ix_i\right]_p \leq\sum_{i=2}^{r+1}[x_i]_p + \frac{f(p-1)}{2}+A\\
 & =\sum_{i=2}^{r+1}[x_i]_{p,f,-} + \frac{f(p-1)}{2}+A.
\end{aligned}$$ 
 In particular, for every positive integer $k$,
 $$
 \left[\sum_{i=2}^{r+1}d_i\frac{p^{fk}-1}{p^f-1}x_i\right]_{p,fk}\leq\sum_{i=2}^{r+1}\left[\frac{p^{fk}-1}{p^f-1}x_i\right]_{p,fk,-} + \frac{fk(p-1)}{2}+A.
 $$
 By Lemma \ref{HD}, dividing by $k$ we get
 $$
 \left[\sum_{i=2}^{r+1}d_ix_i\right]_{p,f}\leq\sum_{i=2}^{r+1}[x_i]_{p,f,-} + \frac{f(p-1)}{2}+A/k.
 $$
 and taking $k\to\infty$ gives us (\ref{V2eq1}). The proof for case (ii) is similar.
 
 Conversely, suppose that (\ref{V2eq1}) holds for every  $f \ge 1$ and every $ 0 \le x_2,\ldots,x_{r+1} < p^f-1$. Let $l$ be an integer such that $\sum_i d_i < p^l$. Then, if $ 0 \le x_2,\ldots,x_{r+1}<p^f-1$, $\sum_{i}d_ix_i< p^{f+l}-1$, so
$$\begin{aligned}
 \left[\sum_{i=2}^{r+1}d_ix_i\right]_p=\left[\sum_{i=2}^{r+1}d_ix_i\right]_{p,f+l} & \leq\sum_{i=2}^{r+1}[x_i]_{p,f+l,-} + \frac{(f+l)(p-1)}{2}\\
 & =\sum_{i=2}^{r+1}[x_i]_{p} + \frac{f(p-1)}{2} + \frac{l(p-1)}{2},\end{aligned}
$$
and (\ref{V3eq1}) holds with $A=l(p-1)/2$.
 
Finally, suppose that (\ref{V2eq2}) holds for every $f \ge 1$ and every $ 0 \le x_2,\ldots,x_{r+1} < p^f-1$. Let $l$ be an integer such that $2\sum_i d_i < p^l$. Then $\sum_i d_ix_i+(p^{f+l}-1)/2<p^{f+l}-1$, so
 $$\begin{aligned}
 1+\left[\sum_{i=2}^{r+1}d_ix_i+\frac{p^f-1}{2}\right]_p & =
 \left[p^{f+l}+\sum_{i=2}^{r+1}d_ix_i+\frac{p^f-1}{2}\right]_p\\
& =\left[p^f\frac{p^{l}+1}{2}+\sum_{i=2}^{r+1}d_ix_i+\frac{p^{f+l}-1}{2}\right]_p \\
& \leq\left[\frac{p^l+1}{2}\right]_p+\left[\sum_{i=2}^{r+1}d_ix_i+\frac{p^{f+l}-1}{2}\right]_p\\
& = 1+\frac{l(p-1)}{2}+\left[\sum_{i=2}^{r+1}d_ix_i+\frac{p^{f+l}-1}{2}\right]_{p,f+l}\\
& \leq 1+\frac{l(p-1)}{2}+\sum_{i=2}^{r+1}[x_i]_{p,f+l,-}+\frac{(f+l)(p-1)}{2}\\
& =1+\sum_{i=2}^{r+1}[x_i]_{p}+\frac{f(p-1)}{2}+l(p-1), \end{aligned}$$
 and (\ref{V3eq2}) holds with $A=l(p-1)$.
\end{proof}

\section{Theorems of finite monodromy}
From known results of Kubert, explained in  \cite[4.1,4.2,4.3]{Ka-RLSA}, and the result  \cite[Thm. 3.1]{G-K-T}, we know that $G_{geom}$ and $G_{arith}$ for $\sF_{p,D,x^{D},\chi_2}\otimes \alpha^{-\deg}$ are finite when $q$ is a power of an odd prime $p$ and $D$ is any of
$$\frac{q+1}{2},\;  \frac{q^n+1}{q+1} {\rm \ with \ odd \ }n,\; 2q-1.$$
We will refer to these as the known cases.

We stumbled upon the empirical fact that $\sF_{3,23,x^{23},\chi_2}\otimes \alpha^{-\deg}$ seemed to have finite (arithmetic and geometric) monodromy, although it was not one of the known cases. As we will prove below, the monodromy is in fact finite.
A computer search for each of $p=3,5,7,11$ and each $2 \le D \le 10^6$ found no other cases than this one and the known cases with finite monodromy. It is not clear whether there should be infinitely many $(p,D)$ other that the known ones with finite monodromy, or finitely many,
or just this one.

In this section, we prove that $\sF_{3,23,x^{23},\chi_2}\otimes \alpha^{-\deg}$ has finite arithmetic and geometric monodromy groups. More generally, we prove that the two-parameter family 
$$\sG(3, 23, 1,5,\chi_2),$$
whose traces at points $(s,t) \in \A^2(K)$, for $K/F_3$ a finite extension, are the sums
$$(s,t) \mapsto -\sum_{x \in K^\times} \psi_K(x^{23} +sx^5 + tx)\chi_{2,K}(x)/\alpha^{-\deg(K/\F_3)},$$
has finite  arithmetic and geometric monodromy groups. We will do so by applying the criterion from Theorem \ref{Vmellin3}

\begin{thm}
 The two-parameter family $$\sG(3, 23, 1,5,\chi_2)$$ has finite arithmetic monodromy.
\end{thm}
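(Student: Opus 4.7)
By Theorem~\ref{Vmellin3}(ii) applied with $p = 3$, $r = 2$, $d_2 = 5$, $d_3 = 23$, the claim reduces to exhibiting a constant $A \geq 0$ such that
$$\left[5 x_2 + 23 x_3 + \tfrac{3^f - 1}{2}\right]_3 \leq [x_2]_3 + [x_3]_3 + f + A \qquad (\star)$$
holds for every integer $f \geq 1$ and every pair $0 \leq x_2, x_3 < 3^f - 1$. The base-$3$ expansion of $(3^f - 1)/2$ is the all-ones string $\underbrace{11\cdots 1}_f$, with digit sum $f$, which accounts for the $+f$ on the right. A tight ``Stickelberger'' configuration is $(x_2, x_3) = (0, (3^f-1)/2)$: the sum then equals $12(3^f - 1)$, whose base-$3$ digit sum one checks to be $2f$, matching $0 + f + f = 2f$ on the right already with $A = 0$. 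This is the character datum $\rho_2 = \triv$, $\rho_3 = \chi_2$, for which $\rho_2^5 \rho_3^{23} = \chi_2$ since $\chi_2^{23} = \chi_2$.

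The plan is to verify $(\star)$ with a small explicit value of $A$ by a digit-by-digit analysis. Using the identity $[a + b]_3 = [a]_3 + [b]_3 - 2\, C(a, b)$, where $C(a, b)$ is the total number of carries (including cascades) in the base-$3$ addition of $a$ and $b$, one rewrites $(\star)$ as
$$[5 x_2 + 23 x_3]_3 \;-\; 2\, C\bigl(5 x_2 + 23 x_3,\; (3^f - 1)/2\bigr) \;\leq\; [x_2]_3 + [x_3]_3 + A.$$
Every digit of $5 x_2 + 23 x_3$ equal to $2$ at a position $< f$ triggers at least one carry in the addition with $(3^f - 1)/2$, so $C$ is bounded below by the count of such $2$-digits. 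A careful analysis of how the short expansions $5 = 12_3$ and $23 = 212_3$ convolve the digits of $x_2, x_3$ into the digits of $5 x_2 + 23 x_3$, and of how many $2$-digits emerge in each local digit window, should then establish $(\star)$. A direct enumeration at $f = 2$ (where, e.g., $(x_2, x_3) = (0, 2)$ forces $A \geq 2$) suggests $A = 2$ is optimal; in any event, the proof of Theorem~\ref{Vmellin3} guarantees one may take $A \leq l(p-1) = 8$ with $l = 4$ (since $2(d_2 + d_3) = 56 < 3^4$) once the equivalent tight inequality of Theorem~\ref{Vmellin2} is established.

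To pass from finitely many base cases to all $f$, I would invoke Lemma~\ref{HD} (Hasse-Davenport): it lifts the inequality at level $f$ for $(x_2, x_3)$ to level $fk$ for $\bigl(\tfrac{3^{fk} - 1}{3^f - 1} x_2,\, \tfrac{3^{fk} - 1}{3^f - 1} x_3\bigr)$, with both sides multiplied by $k$. Combined with the cyclic-shift invariance $x \mapsto 3x \pmod{3^f - 1}$, this reduces the infinite family $(\star)$ to finitely many orbit representatives at bounded $f$, verifiable by direct computation.

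The main obstacle is that neither $[5 x]_3 - [x]_3$ nor $[23 x]_3 - [x]_3$ is bounded in terms of $[x]_3$: widely spaced copies of the pattern $21_3$ in $x$ already drive $[23 x]_3 - [x]_3$ to infinity. Hence $(\star)$ cannot be deduced by subadditivity applied separately to $5 x_2$, $23 x_3$, and $(3^f - 1)/2$; the required cancellation emerges only from the carry cascade triggered by the simultaneous addition of the all-ones string. The heart of the argument is therefore a careful combinatorial analysis of the interplay between the $2$-content of $5 x_2 + 23 x_3$ and the carries triggered by adding $(3^f - 1)/2$, very likely requiring an explicit identification of the worst-case local digit patterns of $(x_2, x_3)$.
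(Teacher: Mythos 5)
Your reduction to the digit-sum inequality via Theorem~\ref{Vmellin3}(ii) is exactly right, and you correctly identify that $A=2$ is the optimal constant (the pair $(x_2,x_3)=(0,2)$ at $f=2$, or even $(0,1)$ at $f=1$, already forces $A\ge 2$). You also correctly note that subadditivity applied separately to $5x_2$, $23x_3$, and $(3^f-1)/2$ cannot work, since $[23x]_3-[x]_3$ is unbounded; that diagnosis is sound. (Minor slip: your ``tight configuration'' $(0,(3^f-1)/2)$ does \emph{not} saturate the bound with $A=0$ at $f=1$, where the left side is $[24]_3=4$ against a right side of $2$; the tightness there again needs $A=2$.)

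The genuine gap is in your proposed method of passing from base cases to all $f$. Lemma~\ref{HD} only lifts the inequality at level $f$ for $(x_2,x_3)$ to level $fk$ for the \emph{periodic} pair $\bigl(\tfrac{3^{fk}-1}{3^f-1}x_2,\tfrac{3^{fk}-1}{3^f-1}x_3\bigr)$, i.e.\ for pairs whose base-$3$ expansions are $f$-periodic. These are an exponentially small fraction of the pairs at level $fk$. Adding the cyclic shift $x\mapsto 3x\pmod{3^f-1}$ only cuts the search space by a factor of $f$, so the number of orbit representatives at level $f$ still grows like $3^{2f}/f$; there is no reduction to a bounded computation. In the paper the Hasse--Davenport lemma is used only to show the $A>0$ and $A=0$ forms of the criterion are equivalent (Theorem~\ref{Vmellin3} from Theorem~\ref{Vmellin2}); it is not, and cannot be, the mechanism that makes the verification finite. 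What actually makes the verification finite is an \emph{induction on $f$}: for $f\ge 5$, one writes $x$ (and correspondingly $y$) as $3^ka+b$ for an appropriate $k\in\{1,2,3,4\}$ chosen according to the residue of $x$ modulo small powers of $3$, then uses $\bigl[3^k m + n\bigr]_3\le[m]_3+[n]_3$ and a finite hand-check that $[23b+5d+\tfrac{3^k-1}{2}]_3\le[b]_3+[d]_3+k$ for the relevant $b,d$, reducing to the inequality at level $f-k$. Your ``careful analysis of the carry cascade'' would need to be replaced by this kind of digit-peeling induction; as written, the step that closes the argument is missing.
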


\begin{proof}
 We will prove that for every positive integer $f$ and every pair of integers $0\leq x,y<3^f$ we have the inequality
 $$
 \left[23x+5y+\frac{3^f-1}{2}\right]_3\leq [x]_3+[y]_3+f+2.
 $$
 The result follows then from Theorem \ref{Vmellin3}.
 
We proceed by induction on $f$: for $f\leq 4$ one checks it by hand. Let $f\geq 5$ and $0\leq x,y<3^f$. 
 
\smallskip
{\it Case 1}: $x\equiv 0$ (mod 3). 
 
Write $x=3a, y=3c+d$ with 
$$a,c<3^{f-1},~d=0,1,2.$$ 
Then $[5d+1]_3 \leq[d]_3+1$ (check by hand), so 
$$\begin{aligned}
 \left[23x+5y+\frac{3^f-1}{2}\right]_3 & =\left[3\left(23a+5c+\frac{3^{f-1}-1}{2}\right)+5d+1\right]_3\\
& \leq\left[23a+5c+\frac{3^{f-1}-1}{2}\right]_3+[5d+1]_3\\
& \leq[a]_3+[c]_3+(f+1)+[d]_3+1\\ &=[x]_3+[y]_3+f+2\end{aligned}
 $$
by induction.

\medskip
{\it Case 2}: $x\equiv 1$ (mod 3).

Write $x=9a+b$, $y=9c+d$ with 
$$a,c<3^{f-2},~b\in\{1,4,7\},~d<9.$$ 
Then $[23b+5d+4]_3\leq [b]_3+[d]_3+2$ (check by hand), so
$$\begin{aligned}
 \left[23x+5y+\frac{3^f-1}{2}\right]_3 & =\left[9\left(23a+5c+\frac{3^{f-2}-1}{2}\right)+23b+5d+4\right]_3\\
& \leq\left[23a+5c+\frac{3^{f-2}-1}{2}\right]_3+[23b+5d+4]_3\\
& \leq[a]_3+[c]_3+f+[b]_3+[d]_3+2\\ & =[x]_3+[y]_3+f+2\end{aligned}
 $$
 by induction.

\smallskip
{\it Case 3}:  $x\equiv 2$ (mod 3) but $x\not\equiv 8,17$ or $20$ (mod 27).
 
Write $x=27a+b$, $y=27c+d$ with 
$$a,c<3^{f-3},~b\in\{2, 5, 11, 14, 23, 26\},~d<27.$$ Then $[23b+5d+13]_3\leq [b]_3+[d]_3+3$ (check by hand), so
$$\begin{aligned}
 \left[23x+5y+\frac{3^f-1}{2}\right]_3 & =\left[27\left(23a+5c+\frac{3^{f-3}-1}{2}\right)+23b+5d+13\right]_3\\
& \leq\left[23a+5c+\frac{3^{f-3}-1}{2}\right]_3+[23b+5d+13]_3\\
& \leq[a]_3+[c]_3+(f-1)+[b]_3+[d]_3+3\\ & =[x]_3+[y]_3+f+2\end{aligned}
$$
by induction.

\smallskip
{\it Case 4}: $x\equiv 8, 17$ or $20$ (mod 27).
 
Write $x=81a+b$, $y=81c+d$ with 
$$a,c<3^{f-4},~b\in\{8,17,20,35,44,47,62,71,74\},~d<81.$$ 
Then $[23b+5d+40]_3\leq [b]_3+[d]_3+4$ (check by hand), so
$$\begin{aligned}
 \left[23x+5y+\frac{3^f-1}{2}\right]_3 & =\left[81\left(23a+5c+\frac{3^{f-4}-1}{2}\right)+23b+5d+40\right]_3\\
& \leq\left[23a+5c+\frac{3^{f-4}-1}{2}\right]_3+[23b+5d+40]_3\\
& \leq[a]_3+[c]_3+(f-2)+[b]_3+[d]_3+4\\ & =[x]_3+[y]_3+f+2\end{aligned}
$$
by induction.
\end{proof}

\section{Determination of the monodromy groups}
In this section, we will show that with the correct choice \cite[1.7]{Ka-NG2} of $\alpha$, namely 
$$\alpha := -g(\psi,\chi_2),$$
with $g(\psi,\chi_2)$ the quadratic Gauss sum over $\F_3$, we can determine the monodromy of $\sF_{3,23,x^{23},\chi_2}\otimes \alpha^{-\deg}$, and of some other related local systems, exactly. Recall from Lemma \ref{autoduality} that we have
$$G_{geom} \lhd G_{arith} < \SO_{23}(\overline{\Q_\ell}).$$
Thus $G_{geom}$ is an irreducible, primitive (by Lemma \ref{primitivity}) finite subgroup of $\SO_{23}(\overline{\Q_\ell})$. The larger group $G_{arith}$ is a fortiori also an irreducible, primitive finite  subgroup of $\SO_{23}(\overline{\Q_\ell})$.

The traces attained by $\sF_{3,23,x^{23},\chi_2}\otimes \alpha^{-\deg}$, i.e, the traces of elements of $G_{arith}$, are all integers (being algebraic integers in $\Q(\zeta_3)^+=\Q$). Over the field $\F_{81}$, the traces obtained are, by direct calculation, $\{-2,-1,0,1,2,3\}$. Over $\F_{243}$, the traces attained are, by direct calculation, $\{-5,-2,-1,0,1,2\}$. 

Finally, we recall that from Lemma \ref{p-group} that the image of the wild inertia group is the additive group of $\F_{3^5}$, the least extension of $\F_3$ containing the $22$'nd roots of unity.

First we prove the following theorem on finite subgroups of $\SL_{23}(\CC)$:

\begin{thm}\label{simple}
Let $V = \CC^{23}$ and let $G < \SL(V)$ be a finite irreducible subgroup. Let $\chi$ denote the character of $G$ 
afforded by $V$, and suppose that all the following conditions hold:
\begin{enumerate}[\rm(i)]
\item $\chi$ is real-valued;
\item $\chi$ is primitive;
\item $\chi(g) < -1$ for some $g \in G$;
\item The $3$-rank of $G$ is at least $5$.
\end{enumerate}
Then $G \cong \Co_3$ in its unique (orthogonal) irreducible representation of degree $23$.
\end{thm}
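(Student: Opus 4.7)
The plan is to analyze the generalized Fitting subgroup $F^*(G)=F(G)E(G)$ under the four hypotheses, reduce to the case where $E(G)=L$ is a single quasisimple group admitting a faithful irreducible complex representation of degree exactly $23$, and then identify $L$ via the classification of finite simple groups together with tables of low-dimensional complex representations of quasisimples (Hiss--Malle and the \textsc{Atlas}). By primitivity (ii) combined with Clifford theory, the restriction $V|_N$ is isotypic for every normal subgroup $N\triangleleft G$. Applied to $F(G)$, this yields $V|_{F(G)}=mW$ with $\dim W\mid 23$, hence $\dim W\in\{1,23\}$. If $\dim W=23$, then the $23$-Sylow of $F(G)$ is extraspecial of order $23^{1+2}$, and conjugation embeds $G/Z(G)$ into $\Aut(23^{1+2})$; the $3$-part of this automorphism group is carried by $\GL_2(23)$, whose $3$-part is only $3$. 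Together with the cyclic $3$-part of $Z(G)$ this bounds the $3$-rank of $G$ by $2$, contradicting (iv). Hence $\dim W=1$, so $F(G)\le Z(G)\le\mu_{23}\cdot I$. Nonabelianness of $G$ forces $E(G)\ne 1$; writing $E(G)=L_1\ast\cdots\ast L_k$ as a central product of quasisimples and using the tensor decomposition $W=W_1\otimes\cdots\otimes W_k$ with $\dim W_i\ge 2$, the primality of $\dim V=23$ forces $k=1$, so $L:=E(G)$ is a single quasisimple group and $V|_L$ is faithful irreducible of degree $23$. The identity $C_G(F^*(G))\le F^*(G)$ then gives $C_G(L)=Z(G)$, whence $F^*(G)=Z(G)L$ and $G/Z(G)L\hookrightarrow\mathrm{Out}(L)$.

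Next I would identify $L$. The finite list of quasisimple groups admitting a faithful complex representation of degree exactly $23$ includes, among its most prominent members, $A_{24}$ and $M_{24}$ (in the deleted natural permutation representation), $\PSL_2(23)$ and $\PSL_2(47)$, $\Co_2$, $\Co_3$, and a handful of further quasisimple Lie-type groups in cross characteristic. For $A_{24}$ and $M_{24}$ in the deleted permutation representation, every character value equals $\#\mathrm{Fix}(g)-1\ge -1$, contradicting (iii); condition (iv) also kills $M_{24}$ since $|M_{24}|_3=3^3$. The $\PSL_2(q)$ candidates have $3$-rank $\le 1$, contradicting (iv). Non-real-valued candidates fall to (i). For $\Co_2$, the $3$-rank is only $4$, so (iv) eliminates it as well. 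What remains is $L=\Co_3$ in its unique irreducible (orthogonal) representation of degree $23$: this $L$ has $3$-rank exactly $5$, witnessed by the maximal subgroup of shape $3^5{:}(2\times M_{11})$; its degree-$23$ character is real-valued with Frobenius--Schur indicator $+1$; and that character attains values strictly below $-1$ on some conjugacy class.

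Finally, since $\mathrm{Out}(\Co_3)=1$ and $Z(\Co_3)=1$, the inclusion $L\le G$ together with $G/Z(G)L\hookrightarrow\mathrm{Out}(L)$ collapses to $G=Z(G)\cdot\Co_3$. The realness of $\chi$ and the oddness of $\dim V=23$ force the Frobenius--Schur indicator to be $+1$, so $G$ preserves a nondegenerate symmetric form on $V$; hence $Z(G)\le(\mu_{23}\cdot I)\cap\mathrm{O}(V)=\{\pm I\}\cap(\mu_{23}\cdot I)=\{I\}$ (since $23$ is odd), and $G=\Co_3$. The hard part is Step 2: systematically traversing the short list of quasisimple groups of degree $23$ and ruling out all but $\Co_3$ using (i)--(iv) simultaneously. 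Hypothesis (iv) on the $3$-rank is the decisive one, tuned precisely to separate $\Co_3$ from $\Co_2$, from the $\PSL_2$ candidates, and from the smaller sporadic candidates.
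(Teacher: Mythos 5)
Your proposal is correct and follows the same overall architecture as the paper's proof: reduce via the generalized Fitting subgroup / normal structure to either an extraspecial $23$-group case or a (quasi-/almost-)simple case, then traverse the Hiss--Malle list of quasisimple groups with a faithful irreducible $23$-dimensional complex representation and eliminate all candidates except $\Co_3$ using (i)--(iv). The two differences of substance are: (1) you carry out the $F^*(G)$ analysis by hand, whereas the paper black-boxes it by citing \cite[Prop.\ 2.8]{G-T} directly; and (2) in the extraspecial case you rule it out via condition (iv) (bounding the $3$-rank by looking at $\Aut(23^{1+2})$, whose $3$-part sits inside $\GL_2(23)$ and is cyclic of order $3$), whereas the paper rules it out via condition (i) (the character field of the extraspecial $23$-group forces $\Q(\zeta_{23}) \subseteq \Q(\chi)$, so $\chi$ is not real-valued). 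Both eliminations are valid; the paper's is slightly cleaner, and yours has a harmless overcount --- you allow a ``cyclic $3$-part of $Z(G)$,'' but since $Z(G)\le\mu_{23}\cdot I$ inside $\SL_{23}$, its $3$-part is actually trivial, so the $3$-rank bound is $\le 1$, not $2$; either way it is well below $5$. One other minor gap relative to the paper: you assert that $\Co_2$ has $3$-rank $4$ without justification, while the paper derives this from the structure of a Sylow $3$-subgroup (a normal extraspecial $3^{1+4}_+$ of index $3$, on which an elementary abelian $3^5$ would force a $3$-dimensional totally isotropic subspace of $\F_3^4$, impossible). It would strengthen your write-up to include that computation or a precise reference, but the conclusion is the same.
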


\begin{proof}
By the assumption, the $G$-module $V$ is irreducible and primitive; furthermore, it is tensor indecomposable and not tensor induced
since $\dim V = 23$ is prime. Next, we observe by Schur's Lemma that condition (i) implies $\bfZ(G) = 1$. Now we can apply 
\cite[Proposition 2.8]{G-T} (noting that the subgroup $H$ in its proof is just $G$ since $G < \SL(V)$) and arrive at one of the following
two cases.

\smallskip
(a) {\it Extraspecial case}: $P \lhd G$ for some extraspecial $23$-group of order $23^3$ that acts irreducibly on $V$. But in this case,
$\chi|_P$ cannot be real-valued (in fact, $\Q(\chi|_P)$ would be $\Q(\exp(2\pi i/23))$, violating (i).

\smallskip
(b) {\it Almost simple case}: $S \lhd G \leq \Aut(S)$ for some finite non-abelian simple group $S$. In this case, we can apply the main
result of \cite{H-M} and arrive at one of the following possibilities for $S$.

$\bullet$ $S = \Alt_{24}$, $M_{24}$, or $\PSL_2(23)$. Correspondingly, we have that
$G = \Alt_{24}$ or $\SSS_{24}$, $M_{24}$, and $\PSL_2(23)$ or $\PGL_2(23)$. In all of these possibilities, 
$\chi(x) \geq -1$ for $x \in G$ by \cite{ATLAS}, violating (iii).

$\bullet$ $S = \PSL_2(47)$. This is ruled out since $\Q(\chi|_S)$ would be $\Q(\sqrt{-47})$, violating (i).    

$\bullet$ $S = \Co_2$. In this case, $G = S$, and a Sylow $3$-subgroup $P$ of $G$ has a normal extraspecial 
$3$-subgroup $Q \cong 3^{1+4}_+$ of index $3$, see \cite{ATLAS}. Since $G$ has $3$-rank $\geq 5$ by (iv), $P$ contains 
an elementary abelian $3$-subgroup of order $3^5$, whence $Q$ contains a subgroup $R \cong C_3^4$. Identifying $\bfZ(Q)$ with 
$\F_3$, we see that the commutator map induces a non-degenerate symplectic form on $Q/\bfZ(Q) \cong \F_3^4$. As $R$ is abelian,
this form is totally isotropic on $R\bfZ(Q)/\bfZ(Q)$ which has order at least $3^3$. But this is a contradiction, since any isotropic subspace
in $\F_3^4$ has dimension at most $2$.

$\bullet$ $S = \Co_3$. In this case $G = \Co_3$, as stated. 
\end{proof}

\begin{thm}\label{exactmono}We have the following results.
\begin{enumerate}[\rm(i)]
\item For $\sF_{3,23,x^{23},\chi_2}\otimes \alpha^{-\deg}$, we have $G_{geom}=G_{arith}=\Co_3$, the Conway group $\Co_3$, in its
irreducible orthogonal representation of dimension $23$.
\item For the two-parameter family $\sG(3, 23, 1,5,\chi_2)$, we have $G_{geom}=G_{arith}=\Co_3$.
\item The local system on $\G_m \times \A^2/\F_3$ whose trace is given as follows: for any finite extension $K/\F_3$, and any 
$(r \neq 0, s,t)\in \G_m(K) \times \A^2(K)$,
$$(r,s,t)\mapsto -\chi_{2,K}(r)\sum_{x \in K^\times}\psi_K(rx^{23} + sx^5 +tx)\chi_{2,K}(x)/\alpha^{-\deg(K/\F_p)},$$
has $G_{geom}=G_{arith}=\Co_3$.
\item For any finite extension $K/\F_3$, and any $s_0 \in K$, the local system on $\A^1/K$ whose trace function, at points $t \in L$, $L$ a finite extension of $K$, is given by
$$t \mapsto -\sum_{x \in L^\times}\psi_L(x^{23} + s_0x^5 +tx)\chi_{2,L}(x)/\alpha^{-\deg(L/\F_p)},$$
has $G_{geom}=G_{arith}=\Co_3$.
\item The local system on $\A^1/\F_3$ whose trace function, at points $t \in K$, $K$ a finite extension of $\F_3$, is given by
$$t \mapsto -\sum_{x \in K^\times}\psi_L(x^{23} + tx^5)\chi_{2,K}(x)/\alpha^{-\deg(K/\F_p)},$$
has $G_{geom}=G_{arith}=\Co_3$. 
\item For any finite extension $K$ of $\F_3$, and any $(s_0,t_0) \in \A^2(K)$ other than $(0,0)$, the local system on $\G_m/K$ whose trace function at points $r \in L^\times$, $L$ a finite extension of $K$, is given by
$$r \mapsto -\chi_{2,L}(r)\sum_{x \in L^\times}\psi_L(rx^{23} + s_0x^5 +t_0x)\chi_{2,L}(x)/\alpha^{-\deg(L/\F_p)},$$
has $G_{geom}=G_{arith}=\Co_3$.
\end{enumerate}
\end{thm}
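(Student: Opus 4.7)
The local system in (vi) at $(s_0, t_0)$ is the pullback of the three-parameter family in (iii) along the closed immersion $r \mapsto (r, s_0, t_0)$, so both $G_{\mathrm{arith}}$ and $G_{\mathrm{geom}}$ of (vi) are contained in $\Co_3$. The plan is to prove equality by pulling back along a degree-$23$ Galois cover and identifying the resulting sheaf with one of the previously handled families.

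Pulling back along the Galois cover $\rho \mapsto \rho^{23}$ of $\G_m$ and substituting $y = \rho x$ in the inner sum (the $\chi_2$ factors cancel because $\chi_2(\rho^{23}) = \chi_2(\rho)$), the pulled-back trace becomes
\[
-\sum_y \psi\bigl(y^{23} + (s_0/\rho^5) y^5 + (t_0/\rho) y\bigr)\chi_2(y)/\alpha^{\deg(K/\F_3)}.
\]
After the change of variable $u = 1/\rho$, this coincides with the pullback of $\sG(3,23,1,5,\chi_2)$ from (ii) along the map $\iota_{s_0, t_0}: u \mapsto (s_0 u^5, t_0 u)$. We distinguish three cases based on which coordinates vanish.

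In case (A), $t_0 = 0$ and $s_0 \neq 0$, the map $\iota_{s_0, 0}$ is a degree-$5$ cover onto $\G_m \times \{0\}$, so the pullback is family (v) pulled back along $u \mapsto s_0 u^5$. In case (B), $s_0 = 0$ and $t_0 \neq 0$, the map $\iota_{0, t_0}$ is an isomorphism onto $\{0\} \times \G_m$, so the pullback is family (i) pulled back along the automorphism $u \mapsto t_0 u$. Since $\Co_3$ is simple and its smallest proper subgroup has index $276$ (see \cite{ATLAS}), the degree-$5$ pullback in case (A) preserves $G_{\mathrm{geom}} = \Co_3$, and the automorphism in case (B) trivially does. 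Combined with the containment in $\Co_3$, this yields $G_{\mathrm{geom}}^{(\mathrm{vi})}(s_0, t_0) = \Co_3$ in cases (A) and (B), and the arithmetic equality follows by sandwiching with $G_{\mathrm{arith}}^{(\mathrm{iii})} = \Co_3$.

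In case (C), $s_0$ and $t_0$ are both nonzero, $\iota_{s_0, t_0}$ is a closed immersion onto a curve in $\A^2$, and $\iota_{s_0, t_0}^* \sG(3,23,1,5,\chi_2)$ is not in a previously handled form. We invoke Theorem~\ref{simple} directly for $G := G_{\mathrm{geom}}^{(\mathrm{vi})}(s_0, t_0) \leq \Co_3$. Geometric irreducibility of the local system follows from the Mackey criterion applied to its Fourier-theoretic description: it is, up to a rank-one twist, the Fourier transform of the pushforward of $\sL_{\psi(s_0 x^5 + t_0 x)} \otimes \sL_{\chi_2}$ along the $23$rd-power map $x \mapsto x^{23}$, and the rank-one sheaf admits no nontrivial $\mu_{23}$-translation symmetry because $\gcd(5,23) = 1$. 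Primitivity follows from an adaptation of Lemma~\ref{primitivity} to the $r$-variable. Real-valued traces and inclusion in $\mathrm{SO}_{23}$ come from Lemma~\ref{autoduality}, and an element of trace $< -1$ is exhibited by direct computation at a small $\F_{3^n}$-point. For the $3$-rank condition, the local monodromy at $r = 0$ is analyzed via the $23$-cover identification with $\iota_{s_0, t_0}^* \sG(3,23,1,5,\chi_2)$ at $u = \infty$: the slope structure of (ii) along the curve $\iota_{s_0, t_0}(\G_m)$ as it approaches the corner of $\P^1 \times \P^1$ yields, by an argument parallel to Lemma~\ref{p-group}, an image of wild inertia equal to $(\F_{3^5}, +)$ of $3$-rank $5$. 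Theorem~\ref{simple} then forces $G = \Co_3$, and $G_{\mathrm{arith}}^{(\mathrm{vi})}(s_0, t_0) = \Co_3$ follows by sandwiching. The principal difficulty lies in the wild-inertia computation in case (C), which demands a careful slope analysis of $\sG(3,23,1,5,\chi_2)$ along the curve $\iota_{s_0, t_0}(\G_m)$ at infinity.
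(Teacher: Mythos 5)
Your proposal addresses only part (vi) of the theorem and does not touch parts (i) through (v), so it is at best a partial proof. More importantly, even for (vi), you miss the observation that makes the paper's proof short: \emph{$\Co_3$ is not only maximal but also \textbf{minimal} among finite irreducible subgroups of $\SO_{23}(\CC)$} — no proper subgroup of $\Co_3$ acts irreducibly on $\CC^{23}$, as one reads off from the ATLAS list of maximal subgroups and their character tables. Once that is in hand, the argument for (iv), (v), (vi) collapses to two lines: each is a pullback of (ii) or (iii), so its $G_{\mathrm{geom}}$ sits inside $\Co_3$; each is geometrically irreducible (the paper shows this via Fourier-transform/Katz middle-extension arguments for (v) and (vi)); so $G_{\mathrm{geom}}$ is an irreducible subgroup of $\Co_3$, hence equals $\Co_3$ by minimality, and $G_{\mathrm{arith}}$ is then sandwiched. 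You do establish the containment $G_{\mathrm{geom}} \subseteq \Co_3$ (granting (iii)) and you do address geometric irreducibility, so you have in your hands both ingredients needed for the fast argument; you simply fail to combine them.

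Because you lack the minimality shortcut, your case (C) becomes a direct application of Theorem~\ref{simple}, which requires verifying all four of its hypotheses from scratch for the one-variable sheaf. This is where the genuine gap lies: the $3$-rank $\geq 5$ condition. You sketch an analysis of wild inertia at $r = 0$ ``by an argument parallel to Lemma~\ref{p-group}'' using ``careful slope analysis along the curve $\iota_{s_0,t_0}(\G_m)$ at infinity,'' but this is asserted rather than proved, and it is not a trivial adaptation — Lemma~\ref{p-group} treats the $P(\infty)$-representation of the one-parameter Kloosterman/hypergeometric descent of $\sF_{p,D,x^D,\chi}$, whereas here one would need to control the wild inertia of the Fourier transform of $[23]_\star(\sL_{\psi(s_0 x^5 + t_0 x)} \otimes \sL_{\chi_2})$ twisted by $\sL_{\chi_2(r)}$, and the local structure at $r=0$ and $r=\infty$ of such a sheaf is not the same computation. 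Likewise, ``primitivity follows from an adaptation of Lemma~\ref{primitivity} to the $r$-variable'' is not spelled out, and Lemma~\ref{primitivity} as stated applies to $\sF_{p,D,f,\chi}$ on the $t$-line, not to the $r$-variable family. All of this machinery evaporates once the minimality of $\Co_3$ is invoked; the paper never needs a second wild-inertia computation beyond the one used in part (i).

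Your cases (A) and (B) are correct in spirit (index bounds against the smallest faithful permutation degree $276$ of $\Co_3$), but again they are circuitous: since (v) and (i) are known to have $G_{\mathrm{geom}} = \Co_3$, and the degree-$23$ pullback of (vi) contains those monodromies with bounded index, the containment plus minimality settles things at once, uniformly in $(s_0, t_0)$, without splitting into cases.
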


\begin{proof}
We first note that among finite irreducible subgroups of $\SO_{23}(\CC)$, the Conway group $\Co_3$ is both maximal {\bf and} minimal.
Indeed, the minimality is clear from the \cite{ATLAS} list of maximal subgroups of $\Co_3$ and their character tables. The maximality is clear
from Theorem \ref{simple}.

The local system in (iv) has finite monodromy because its restriction to the dense open set $(\G_m)^3/\F_3$ has finite monodromy, being the $\sG_{big}$ partner, in the sense of Lemma \ref{big} of two-parameter local system of (ii). The $\chi_2(t)$ term in front keeps its $G_{arith}$ in $SO(23)$, by  \cite[1.7]{Ka-NG2}. It is geometrically
irreducible because this is so already after pullback to the $(1,0,t)$ $t$-line, where it is $\sF_{3,23,x^{23},\chi_2}\otimes \alpha^{-\deg}$.

We remark that the local system in (v) is geometrically irreducible, because it is the Fourier Transform of 
$[5]_\star (\sL_{\psi(x^{23})}\otimes \sL_{\chi_2(x)})$, a middle extension sheaf (cf. \cite[proof of 3.3.1]{Ka-TLFM}) which is geometrically irreducible. Indeed it is   $I(\infty)$-irreducible, because its five $I(\infty)$-slopes are each $23/5$, with exact denominator $5$.

We also remark that the local system in (vi) is geometrically irreducible, because it is $\sL_{\chi_2(t)}$ tensored with the Fourier Transform of 
$[23]_\star (\sL_{\psi(s_0x^{5}+r_0x)}\otimes \sL_{\chi_2(x)})$, a middle extension sheaf (cf. \cite[proof of 3.3.1]{Ka-TLFM}) which is geometrically irreducible. Indeed it is   $I(\infty)$-irreducible, because its $23$ $I(\infty)$-slopes are each either $5/23$, with exact denominator $23$, if $s_0 \neq 0$, or each $1/23$ if $s_0=0$ but $r_0 \neq 0$.

Thus it suffices to prove (i). For then (ii) and (iii) follows from (i) and the maximality of $\Co_3$ as a finite irreducible subgroup of $\SO_{23}(\CC)$, and then (iv), (v) and (vi) each follow from (ii) and (iii) and  the minimality of  $\Co_3$ as a finite irreducible subgroup of $\SO_{23}(\CC)$.

For case (i), we apply Theorem \ref{simple} to $G_{arith}$, to conclude that $G_{arith}=\Co_3$. Then we use minimality of $\Co_3$ to conclude that $G_{geom}=\Co_3$.
\end{proof}

\end{document}